\newcommand{\Tr}{\mathrm{Tr}}
\newcommand{\F}{\mathbb{F}}
\newcommand{\fq}{\mathbb{F}_q}
\newcommand{\ordem}{\text{ord}}
\newtheorem{theorem}{Theorem}[section]
\newtheorem{proposition}[theorem]{Proposition}
\newtheorem{definition}[theorem]{Definition}
\newtheorem{problem}{Problem}
\newtheorem{lemma}[theorem]{Lemma}
\newtheorem{corollary}[theorem]{Corollary}
\newtheorem{example}[theorem]{Example}
\newtheorem{remark}[theorem]{Remark}
\author{Daniela  Oliveira and F. E. Brochero Mart\'{i}nez}
\keywords{circulant matrices, Artin-Schreier  curves, Hasse-Weil bound, irreducible polynomial with prescribed trace}
\subjclass[2020]{Primary 12E20 Secondary 11T06}
\title{%On circulant matrices and affine rational points  of  Artin-Schreier  curves \\ OR \\ 
Artin-Schreier curves given by $\fq$-linearized polynomials}
\begin{document} 
%\baselineskip=2\baselineskip

\maketitle 

\begin{abstract}

Let $\F_q$ be a finite field with $q$ elements, where $q$ is a power of an odd prime $p$.  In this paper we associate circulant matrices and quadratic forms with the  Artin-Schreier curve $y^q -y= x \cdot F(x)- \lambda,$ where $F(x)$ is a $\F_q$-linearized polynomial and $\lambda \in \F_q$. Our results provide a characterization of the number of affine rational points  of this curve in the extension $\F_{q^r}$ of $\F_q$, for $\gcd(q,r)=1$.  In  the case  $F(x) = x^{q^i}-x$ we give a complete description of the number of affine rational points  in terms of Legendre symbols and quadratic characters.
\end{abstract}

\section{Introduction}
%Let $\F_{q}$ be a finite field with $q=p^e$ elements, where $p$ is an odd prime and $e$ a positive integer. 
%Let us denote $\Tr$ the relative trace map from $\F_{q^r}$ to $\F_{q^r}.$ 
Information about the number of affine rational points  of algebraic curves over finite fields has many applications in coding theory, cryptography, communications and related areas, e.g. \cite{NiCha, TVN}. In this paper we investigate the number of affine rational points  of plane curves given by  
%It is important to know the number of affine rational points  of the curve in order to using those applications. Throughout this paper, we will work with a family of curves, that are called Artin-Schreier curves. For $g(x) \in \F_q[x]$, a plane curve with equation of the form
\begin{equation}\label{artin}
\mathcal C_{g} : y^q - y =g(x)
\end{equation}
in extensions $\F_{q^r} $ of $\F_q$ where $q=p^e$, $p$ is an odd prime, $r \in \mathbb N$ and $g(x) \in \F_q[x]$. These curves are called  Artin-Schreier curves and have been extensively studied  in several contexts, e.g.  \cite{Coulter, HefezKakuta,OzbudakSaygi,G.vanGeer1}. 

%Artin-Schreier curves are closely related , as follows. In general, 
For  a polynomial $g(x) \in \F_q[x]$ and $\F_{q^r}$ a finite extension of $\F_q$  we can associate the map
\begin{equation}\label{quadratic}
\begin{matrix}
Q_g:&
\F_{q^r} &\to &\F_q\\
&\alpha&\mapsto &\Tr(g(\alpha)),
\end{matrix}
\end{equation}
where $\Tr:\F_{q^r}\to\F_q$ denotes the trace function. %, i.e., $\Tr(\alpha) =\displaystyle \sum_{i=0}^{r-1}\alpha^{q^i}$ for $\alpha \in \F_{q^r}$.
Let $N_r(Q_g)$ denote the number of zeroes of $Q_g$  in $\F_{q^r}$ and $N_r(\mathcal C_{g})$ the number of affine rational points  of $\mathcal C_{g}$ over $\F_{q^r}^2$. From Hilbert's Theorem $90$ we have 
%\[N(Q) = |\{x \in \F_{q^r} \mid Q(x) = 0 \}|\]
%and $N(\mathcal C)$ the number of affine affine rational points  of the curve $\mathcal C$ in $\F_{q^r}$, using the 
\[N_r(\mathcal C_{g}) = qN_r(Q_g).\]
It follows that the determination of $N_r(\mathcal C_{g})$ is equivalent to the determination of $N_r(Q_g)$.
%Therefore determining $N(Q)$ it is equivalent to determining $N(\mathcal C)$.
Details can be found in 
%Some characterizations and classifications  about these relations can be found  in 
\cite{AnbarMeidl,AnbarMeidl2,OzbudakSaygi2}.

In \cite{Wolfmann}, Wolfmann  determined the number of rational points of the algebraic plane curve defined over $\F_{q^k}$ by the equation $y^q - y = ax^s + b$ where $a\in \F_{q^k}^*$ and $b\in \F_{q^k}$, for even $k$ and special integers $s$.  
In \cite{Coulter}, Coulter determined the number of $\F_q$-rational points of the curve $y^{p^n}-y=ax^{p^{\alpha}+1} + L(x), $  where $a \in \F_q^*$, $t=\gcd(n,e)$ divides $d=(\alpha,e)$ and $L(x)\in \F_q[x]$ is a $\F_{p^t}$-linearized polynomial. In this paper, we determine  $N_r(\mathcal C_{g})$ for  some families of  Artin-Schreier  curves given by specific  polynomials $g(x) \in \F_q[x].$ 
%In our case, we use the relations between the curve $\mathcal C_{g}$ and the map $Q_g$ to given an explicit expression for the number of affine rational points  of the curve $\mathcal C_{g}$, when $g(x)$ satisfies suitable conditions. 

The first aim of this paper is to find $N_r(\mathcal C_{g})$ when  $g(x)=x  F(x)-\lambda$, where $F(x)$ is a $\F_q$-linearized polynomial, $\lambda \in \F_q$ and we assume $\gcd(r,p)=1$. In this cases, we denote $\mathcal C_g$ by $\mathcal C_{F,\lambda}$ for $F(x)$ and $\lambda$ fixed. 
 In order to do so, we prove that $Q_{g+\lambda}$ defines a quadratic form and use this form  to find a connection between the number of affine rational points with the rank of an appropriate circulant  matrix. Theorem~\ref{thF(x)}  provides an explicit formula for $N_r(\mathcal C_{F,\lambda})$ in this case.

A second point of this paper, assuming the hypothesis of Theorem \ref{thF(x)}, is to study the case  $F(x) = x^{q^i}-x$ when $i$ is a positive integer, i.e.,   $\mathcal C_{F,\lambda}$ defined by 
$$y^q-y = x^{q^i+1} - x^2 - \lambda. $$
%We assume that $\gcd(r,p) = 1$ and 
In Theorem \ref{th2}  we find an expression of $N_r(\mathcal C_{x^{q^i}-x,\lambda})$  in terms of Legendre symbols and $p$-adic valuations.

When $i=1$ and $\lambda =0 $  we obtain the curve  $y^q-y = x^{q+1} - x^2 $, which is associated to the number of monic irreducible polynomials in $\F_q[x]$ of degree $r$ with prescribed first two coefficients, i.e., when  we fix the coefficients of $x^{r-1}$ and $x^{r-2}$. To see this,  consider
the map
\begin{equation}
\begin{matrix}
T_2:&
\F_{q^r} &\to &\F_q\\
&\alpha&\mapsto &\sum\limits_{0 \le i < j \le r-1} \alpha^{q^i+q^j}.
\end{matrix}
\end{equation}
 $\Tr(\alpha)$ and $T_2(\alpha)$  determine, respectively, the coefficients of $x^{r-1}$ and $x^{r-2}$ of the characteristic polynomial of $\alpha\in\F_{q^r}$ over $\F_q$.  A straightforward calculation shows that  $T_2(\beta^q-\beta) = \Tr(\beta^{q+1}-\beta^2)$ for all $\beta \in \F_{q^r}$. % For $\beta \in \F_{q^r}$,  
By Hilbert's  Theorem $90$ we have that  $\Tr(\beta)=0$ if and only if there exists $\alpha \in \F_{q^r}$ such that $\beta=\alpha^q-\alpha$ and therefore \(\Tr(\beta) =0 \text{ and } T_2(\beta)=0\text{ if and only if } 0 =T_2(\beta) = T_2(\alpha^q-\alpha) = \Tr(\alpha^{q+1}-\alpha^2) .\)
Consequently, the number of irreducible polynomials of degree $r$ with first two coefficients being zero can be related to the number  of affine rational points  of  the curve $y^q-y= x^{q+1}-x^2$. For more details see \cite{coefprescribed2,coefprescribed1}.

This paper is organized as follows. Section $2$ provides  background material and preliminary results.  In Section $3$ we discuss the case  $g(x) = x F(x)- \lambda$ where $F(x)$ is a $\F_q$-linearized polynomial and $\lambda \in \F_q$. In Section $4$ we give an explicit formula for $N_r(C_{x^{q^i}-x,\lambda})$ when $\gcd(r,p)=1$. 
%We finish in Section 5 by giving an alternative expression  for $N_r(\mathcal C_{x^{q^i}-x,\lambda})$  when $i=1$, which  includes the case where  $\gcd(r,p) =p$.

\section{Preliminary results}

Throughout this paper, $\F_{q}$ denotes a finite field with $q$ elements, where $q$ is a power of an odd prime $p$ and $r$ is a fixed positive integer.  % For $r$ a positive integer 
We % and the field $\F_{q^r}$ we 
define the trace function 
\begin{align*}
\Tr_{\F_{q^r}/\F_q}:\F_{q^r} &\to \F_q\\
\alpha&\mapsto \alpha+\alpha^q+\cdots + \alpha^{q^{r-1}}
%\[\Tr_{\F_{q^r}/\F_q}(\alpha) = \alpha+\alpha^q+\cdots + \alpha^{q^{r-1}}.\]
\end{align*}
and, for simplicity,  we denote  the trace function $\Tr_{\F_{q^r}/\F_q}$  by $\Tr$.
A polynomial $F(x)\in \F_q[x]$ is called \emph{$\F_q$-linearized} if  is of the form $a_0x+a_1x^q+a_2x^{q^2}+\dots +a_l x^{q^l}$, where $a_j\in \F_q$ for all $0 \le j\le l$.   The polynomial $f(x)= a_0+a_1x+a_2x^2+\dots +a_l x^{l}$ is called \emph{associated polynomial of $F(x)$.}

In what follows, %$g(x)\in \F_q[x]$ denotes a polynomial of the form $xF(x)-\lambda$, 
for $F(x)$  a  $\F_q$-linearized and $\lambda\in \F_q$, $\mathcal C_{F,\lambda}$  denotes the curve determined by the equation 
\begin{equation} \label{equacao1}
y^q-y=xF(x)-\lambda
\end{equation}
and $Q_{xF(x)}: \F_{q^r} \to \F_{q^r}$  is the quadratic form given by  $Q_{xF(x)}(\alpha) = \Tr(\alpha F(\alpha))$.
We observe that if $(\alpha,\beta)\in \F_{q^r}^2$ is  a point of  $\mathcal C_{F,\lambda}$, i.e., $\beta^q-\beta=\alpha F(\alpha)-\lambda$ then 
$$0=\Tr(\beta^q-\beta)=\Tr(\alpha F(\alpha)-\lambda)=\Tr(\alpha F(\alpha)) -r\lambda.$$
Reciprocally, if $\alpha\in \F_{q^r}$ satisfies the equation $\Tr(\alpha F(\alpha)) =r\lambda$, then $\Tr(\alpha F(\alpha)-\lambda)=0$ and by Hilbert's Theorem $90$  there exists $\beta\in \F_{q^r}$ such that  $\beta^q-\beta=\alpha F(\alpha)-\lambda$. In addition, any other solution is of the form $\beta+c$  for  $c\in \F_q$ and  it follows that 
\begin{equation}\label{Npontos}
N_r(\mathcal C_{F,\lambda}) = qN_r(Q_{xF(x)-\lambda}).
\end{equation}
We now recall the following standard definitions.
% \vm{Acho que fica  melhor Se $V$ \'e um espa\c co vetorial sobre  $K$ e $\Phi:V\rightarrow K$ \'e uma forma quadr\'atica, a defini\c c\~ao habitual  \'e $\operatorname{dim}\Phi=\operatorname{dim}_K V$. Se $S$ \'e um subespa\c co de $V$, o \emph{ortogonal de} $S$ \'e o subespa\c co $S^{\perp}:=\{x\in V: B(x,S)=0\}$, onde $B$ \'e a forma bilinear associada a $\Phi$; em particular, $V^{\perp}$ \'e dito o \emph{radical} de $(V,B)$. O teorema geral \'e que a matriz associada a $B$ \'e  invers\'\i vel se e somente se $V^{\perp}=\{0\}$ e, nesse caso, $\operatorname{dim}S + \operatorname{dim} S^{\perp}= \operatorname{dim} V$. Tudo isso pode ser achada em T. Y. Lam (meu orientador) \emph{The algebraic theory of quadratic forms}, proposi\c c\~oes 1.2 e 1.3.
%No caso, est\'a-se definindo $\mathcal L=\F^{\perp}$; dessa maneira, definir $\operatorname{dim}\Phi=\operatorname{dim}\mathcal L$ n\~ao est\'a de acordo com a literatura habitual e pode confundir aqueles que conhecem o b\'asico da teoria de formas quadr\'aticas. Sugiro achar um novo nome para essa grandeza; n\~ao encontrei nada na literatura.}

\begin{definition}
Let $\mathbb L$  be a field and $\F$ a finite extension of $\mathbb L$, where char$(\mathbb L) \neq 2$. For a quadratic form  $\Phi:\F \to \mathbb L$ we define the symmetric bilinear form $\varphi:\F\times \F \to \mathbb L$ associated to  $\Phi$ by $\varphi(\alpha,\beta) = \frac{1}2\left(\Phi(\alpha+\beta) -\Phi(\alpha)-\Phi(\beta)\right)$. 
The radical of symmetric bilinear form $\Phi: \mathbb F \to \mathbb L$  is:
\[\text{rad}(\Phi) = \{\alpha \in \mathbb F : \varphi(\alpha, \beta) = 0 \text{ for all } \beta\in \F \}.\]
If rad($\Phi) = \{0\}$, $\Phi $ is non-degenerate.

Let $\mathcal B=\{v_1,\dots, v_r\}$ be a basis of $\F$ over $\mathbb L$. The $r \times r$ matrix $A=(a_{ij})$ defined by 
$$a_{ij}= \begin{cases} \Phi(v_i),& \text{if $i=j$}\\
\frac 12(\Phi(v_i+v_j)-\Phi(v_i)-\Phi(v_j)),& \text{if $i\ne j$}.
\end{cases}
$$
is the associated matrix of the quadratic form $\Phi$ in the basis $\mathcal B$. In particular, the dimension of  $\text{rad}(\Phi)$  is equal to $r- rank (A).$ 

Let $\Phi: \fq^m \to \fq$ and $\Psi : \fq^n\to \fq$ be quadratic forms where $m \ge n$. Let $A$ and $B$ be associated matrix of $\Phi$ and $\Psi$, respectively. We say that $\Phi$ is equivalent to $\Psi$ if there exists $M \in GL_m(\fq)$ such that 
\[M^T AM =\left(
\begin{array}{c|c}
B & 0 \\ \hline
 0 & 0
\end{array}\right)\in M_m(\fq),\]
where $ GL_m(\fq)$ denotes the $m\times m$ invertible matrices over $\fq$ and $M_m(\fq)$ denotes the $m\times m $ matrices over $\fq$. 
Furthermore, $\Psi $ is called a reduced form of $\Phi$ if $\text{rad}(\Psi)= \{0\}. $
\end{definition}

\begin{remark}\label{mm}
Let $F(x)$ be $\F_q$-linearized and $r$ a positive integer. It can be easily seen that the map  $\tilde \Phi: \F_{q^r} \to \F_{q^r}$ given by $\alpha\mapsto \alpha F(\alpha)$ is a quadratic form. Furthermore, since $\Tr$ is a $\F_q$-linear form,  we have that  the map  $ \Phi: \F_{q^r} \to \F_q$  given by $\Phi(\alpha)=\Tr(\tilde \Phi(\alpha))$ is  also a quadratic form.
In fact,  for all $c \in \F_q$ and $\beta \in \F_{q^r}$ we have
\begin{align*}
\Tr(c \beta F(c \beta)) = \Tr(c^2  \beta F( \beta)) = c^2 \Tr(\beta F( \beta)),
\end{align*}  hence  $\Tr((x+y)F(x+y))- \Tr(xF(x))-\Tr(yF(y))$ defines a symmetric billinear form. 

\end{remark}

The following theorem is a well known result about the number of the  solutions of  a special equation over finite fields.
\begin{theorem}[\cite{LiNi}, Theorems $6.26$ and $6.27$] \label{sol}
Let $\Phi$ be a quadratic form over $\F_{q^r}$. Let $\varphi$ be the bilinear symmetric form associated to  $\Phi$,  $v = \dim( \text{rad}(\Phi))$  and $\Psi$ a reduced nondegenerate quadratic form equivalent to $\Phi$.  We define  $S_{\lambda}  = |\{ x \in \F_{q^r}| \Phi(x) = \lambda\}|$ and denote by  $\Delta$ the determinant of the matrix associated to $\Psi$ and $\chi$   the canonical quadratic character of $\F_{q}$. 
\begin{enumerate}[(i)]
\item If  $r+v$ is even, then 
\begin{equation} \label{casopar}
S_{\lambda} =
 \left\{
\begin{array}{ll}  
q^{r-1} +Dq^{(r+v-2)/2}(q-1) \quad &\text{ if } \lambda = 0;\\
 q^{r-1} - Dq^{(r+v-2)/2} \quad &\text{ if } \lambda \neq 0,
\end{array}\right.
 %q^{r-1} +\epsilon_{\lambda} Dq^{(r+v-2)/2}
\end{equation}
where $D = \chi((-1)^{(r-v)/2}\Delta)$. 

\item If $r+v$ is odd, then
\begin{align} \label{casoimpar}
S_{\lambda} = \left\{
\begin{array}{ll}  
q^{r-1}  \quad &\text{ if } \lambda = 0;\\
q^{r-1} + Dq^{(r+v-1)/2} \quad &\text{ if } \lambda \neq 0.
\end{array}\right.
\end{align}
where $D = \chi((-1)^{(r-v-1)/2}\lambda\Delta)$.
\end{enumerate}
\end{theorem}
Clearly, in order to determine $S_{\lambda}$ we need to calculate the dimension of the radical of $\Phi$ and the determinant of a reduced matrix associate to $\Phi$. For this, important tools for our work are complete homogeneous symmetric
polynomials and circulant matrices, which we define below.

\begin{definition}\begin{enumerate}[a)]

\item The \emph{complete homogeneous symmetric polynomials} of degree $k$ is defined  by 
$$h_k(x_1, \dots, x_r)= \displaystyle \sum_{1 \le i_1 \le  \cdots \le i_k \le r} x_{i_1} \cdots x_{i_k}.$$
We denote this polynomial by $h_k(r)$. 

\item  Let  $a_0, a_1, \dots, a_{r-1}$ be  elements of a field $\mathbb L$. The $r \times r $  { \em circulant matrix} $C= (c_{ij})$ associated to the $r$-tuple $(a_0, a_1, \dots, a_{r-1})$ is given by    $c_{ij} =a_k $ for each pair $(i,j)$ such that  $j-i \equiv k \pmod r $. We denote by $C(a_0, a_1, \dots, a_{r-1})$ this circulant matrix, and the vector $(a_0,a_1,\dots, a_{r-1})$ is called  the \emph{generator vector} of the matrix $C$.
\item The \emph{associated polynomial} of the circulant matrix  $C(a_0,a_1, \dots, a_{r-1})$ is  $f(x) =\displaystyle \sum_{i=0}^{r-1} a_ix^i.$
\end{enumerate}
\end{definition}

We will show that, with some additional hypotheses (see Proposition \ref{quadform}),  there exists a base of $\F_{q^r}$ over $\F_q$  such that the associated matrix of the quadratic form $\Tr(Q_{g+\lambda})$ in this base is circulant.

The following theorem, that can be found  as an exercise  in  \cite{ComHomSymPol}, describes  another representation of the polynomial $h_k$.

\begin{theorem}[\cite{ComHomSymPol}, Ex. $7.4$]  \label{chsp}
The polynomial $h_k(r)$  can be expressed as 
  $$h_k(r) = \sum_{l=1}^r \frac{x_l^{r+k-1}}{\prod_{m=1 \atop m \neq l}^r (x_l-x_m)}.$$
\end{theorem}

These polynomials will be useful  to determine the rank of some circulant matrices. In the case when $r$ is relatively prime with the characteristic of the field $\F_q$,  it is well known (e.g. \cite{RankCirculantMatrices}) that  the determinant of  any circulant matrix $C = C(a_0,a_1, \dots, a_{r-1})$  satisfies the relation 
\[\text{det } C= \prod_{i=1}^{r} (a_0 + a_1 \omega_i + \cdots +a_{r-1}\omega_i^{r-1}),\]
where $\omega_1, \dots, \omega_r$ are the $r$-th roots of unity in some extension of $\F_q$.  We will use this fact in order to determine the rank of $C$. More precisely, the rank of $C$ is  equal to the number of  common roots of $f(x)$ and $x^r-1$, as we prove in Theorem \ref{rank}. Before we prove this we need the following definition.%calculate the rank of $C$, we given the following definitions.
\begin{definition}
For each $0 < j \le k$ integers, $A_k$ and $A_{k,j}$ denote the polynomials 
\begin{enumerate} 
\item $A_k(x_1, \dots , x_k) = \displaystyle\prod_{1 \le t < s \le k } (x_s -x_t)$, for all $ k \ge 2.$
\item $A_{k,j}(x_1, \dots , x_k)= (-1)^{j+1}\displaystyle\prod_{1 \le t < s \le k  \atop t,s \neq j } (x_s -x_t),$ for all $k\ge3$.
\end{enumerate}
\end{definition}
The following lemmas show some  relations between the complete homogeneous symmetric polynomials and  the polynomials  $A_k$ and $A_{k,j}$.%, that will be useful to determine the rank of the matrix $C$.

\begin{lemma}\label{relation}
Let $k$ be a positive integer and  for each $1\le j \le k$, $h_{r,j}(k) $ denotes the polynomial $h_r(x_1, \dots , \hat{x}_j, \dots, x_k), $ where $\hat{x}_j$ means that the variable $x_j$ is omitted. Then $$\displaystyle \sum_{j=1}^{k} x_j^{r-1}A_{k,j}h_{r-k+1, j}(k)=0, \text{ for all } k \ge 3.$$
\end{lemma}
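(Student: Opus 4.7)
The plan is to apply Theorem~\ref{chsp} to each complete homogeneous symmetric polynomial appearing in the sum. Since $h_{r-k+1,j}(k)$ is a polynomial of degree $r-k+1$ in the $k-1$ variables $\{x_1,\dots,x_k\}\setminus\{x_j\}$, the theorem yields
\[
h_{r-k+1,j}(k) \;=\; \sum_{\substack{l=1\\ l\neq j}}^{k}\frac{x_l^{r-1}}{\prod_{m\neq l,\,m\neq j}(x_l-x_m)},
\]
the numerator exponent being $(k-1)+(r-k+1)-1 = r-1$. Substituting this back turns the single sum in the statement into a double sum over ordered pairs $(j,l)$ with $j\neq l$:
\[
S \;=\; \sum_{\substack{1\le j,l\le k\\ j\neq l}}\frac{x_j^{r-1}\,x_l^{r-1}\,A_{k,j}}{\prod_{m\neq j,\,l}(x_l-x_m)}.
\]

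Next I would pair the term indexed by $(j,l)$ with the one indexed by $(l,j)$, and show that each such pair sums to zero. To do this, split the factors of $A_{k,j}$ into those involving $x_l$ and those not. The latter form the product $P_{j,l} := \prod_{t<s,\,t,s\notin\{j,l\}}(x_s-x_t)$, which is invariant under swapping $j \leftrightarrow l$ (and is, in fact, the same product one extracts from $A_{k,l}$ after removing the factors containing $x_j$). The factors that do involve $x_l$ agree, up to a sign arising from reordering linear factors of the form $(x_l-x_m)$ versus $(x_m-x_l)$, with the denominator $\prod_{m\neq j,l}(x_l-x_m)$. After cancellation, the $(j,l)$-summand takes the form $(-1)^{j+1+N_{j,l}}\,x_j^{r-1}\,x_l^{r-1}\,P_{j,l}$, where $N_{j,l}=|\{m : m>l,\,m\neq j\}|$.

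It then suffices to show that the signs attached to $(j,l)$ and to $(l,j)$ are opposite. A short parity argument—splitting on whether $j<l$ or $j>l$ to evaluate $N_{j,l}$ and $N_{l,j}$—shows that the two exponents differ by $2|l-j|-1$, which is odd; hence every reciprocal pair cancels, and $S=0$. The main obstacle is the careful bookkeeping of signs, both the prefactor $(-1)^{j+1}$ built into $A_{k,j}$ and the signs accumulated when reordering linear factors; once the common symmetric factor $P_{j,l}$ is isolated and the reflection symmetry $(j,l)\leftrightarrow(l,j)$ is exploited, the result follows immediately from Theorem~\ref{chsp}.
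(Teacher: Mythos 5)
Your proposal is correct and follows essentially the same route as the paper's own proof: expand each $h_{r-k+1,j}(k)$ via Theorem~\ref{chsp} (with the same exponent count $r-1$), turn the sum into a double sum over ordered pairs $(j,l)$, cancel the $x_l$-factors of $A_{k,j}$ against the denominator, and observe that the surviving factor is symmetric under $(j,l)\leftrightarrow(l,j)$ while the accumulated sign is antisymmetric, so reciprocal pairs cancel. The paper merely packages the sign bookkeeping with the symbol $\epsilon_{l,j}$ instead of your count $N_{j,l}$; the two computations agree.
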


\begin{proof}
 Let us denote $\epsilon_{l,j} = \begin{cases}
1 & \text{ if } l>j\\
-1 & \text{ if } l<j\\
0 & \text{ if } l=j
\end{cases}$. By Theorem \ref{chsp} it follows that 

\begin{align*}
\displaystyle \sum_{i=1}^{k} x_j^{r-1} A_{k,j} h_{r-k+1,j}(k)& = \sum_{j=1}^{k} x_j^{r-1} (-1)^{j+1} \prod_{1 \le t<s \le k \atop t,s \neq j}(x_s-x_t) \sum_{l=1 \atop l \neq j}^{k} \left( \frac{x_l^{r-1}}{\prod_{m=1 \atop m \neq j,l }^{k} (x_l-x_m)}\right)\\
%& = \displaystyle \sum_{j=1}^{k} x_j^{n-1} (-1)^{j+1}  \sum_{l=1 \atop l \neq j}^{k} \left( \frac{x_l^{n-1}\displaystyle\prod_{1 \le t<s \le k \atop t,s \neq j}(x_s-x_t)}{\displaystyle\prod_{m=1 \atop m \neq j,l }^{k} (x_l-x_m)}\right)\\
& = \displaystyle \sum_{j=1}^{k} x_j^{r-1}  (-1)^{j+1}  \sum_{l=1 \atop l \neq j}^{k} x_l^{r-1}\displaystyle\prod_{1 \le t<s \le k \atop t,s \neq j,l }(x_s-x_t)(-1)^{k-l}\epsilon_{l,j} \\
& = \displaystyle \sum_{j=1}^{k}  \sum_{l=1 \atop l \neq j}^{k}   x_j^{r-1}x_l^{r-1}\displaystyle\prod_{1 \le t<s \le k \atop t,s \neq j,l }(x_s-x_t)(-1)^{k+j-l+1}\epsilon_{l,j} .\\
\end{align*}
For each $l$ and $j$ fixed, the sum runs over the term $(x_lx_j)^{r-1} = (x_jx_l)^{r-1}$ twice and then
$$ x_j^{r-1}x_l^{r-1}\left(\displaystyle\prod_{1 \le t<s \le k \atop t,s \neq j,l }(x_s-x_t)(-1)^{k+j-l+1}(\epsilon_{l,j} + \epsilon_{j,l} ) \right) = 0.$$ 
\end{proof}

\begin{lemma} \label{relationAk}
Let $k \ge 2$ be an integer. Then 
$$A_{k+1} = \sum_{j=1}^{k+1} \frac{x_1 \dots x_{k+1}}{x_j} A_{k+1,j}.$$ 
In addition,
$$\sum_{j=1}^{k} \frac{x_1 \cdots x_{k}}{x_j} \dfrac{1}{\prod_{r =1 \atop r \neq j}^{k} (x_r-x_j)}=\frac {F(x_1,\dots , x_{k+1} )}{A_{k+1}}= 1. $$ 

\end{lemma}

\begin{proof}
We now let
 $$F_{k+1} = \sum_{j=1}^{k+1} \frac{x_1 \dots x_{k+1}}{x_j} A_{k+1,j}.$$
We will prove that $A_{k+1}=F_{k+1}$   by induction on the number of variables. For $k=2$ we have
\begin{align*}
F_3 & = \sum_{j=1}^{3} \frac{x_1x_2x_3}{x_j}(-1)^{j+1} \prod_{1 \le s < t \le 3 \atop s,t \neq j}(x_t-x_s) \\
& = x_2x_3(x_3-x_2) - x_1x_3(x_3-x_1) + x_1x_2(x_2-x_1) \\
& = (x_2-x_1)(x_3-x_1)(x_3-x_2) = A_3.
\end{align*}
Now suppose that the result is true for  some $k\ge 2$. The polynomial   $F_{k+1}$  has degree ${k+1 \choose 2}$ and if $x_i = x_j$ for any $1 \le i < j \le k+1$, it follows that $F_{k+1}= 0$, which implies that $A_{k+1}$ divides  $F_{k+1}$. Since $A_{k+1}$ has the same degree of $F_{k+1}$ we obtain that $F_{k+1} = c A_{k+1}$ for some $ c \in \F_q$. From the fact that $A_{k+1}$ is a monic polynomial in respect to the variable $x_{k+1},$  it remains to show that $c=1$. In order to prove this,  we equate the coefficients of  $x_{k+1}^k$ on both sides of $F_{k+1} = c A_{k+1}$ to get 
\[\sum_{j=1}^{k} \frac{x_1 \cdots x_k}{x_j} (-1)^{j+1}\prod_{1 \le s < t \le k \atop s,t \neq j}(x_t-x_s)= c \cdot \prod_{1 \le s < t \le k}(x_t-x_s) \]
and this means that $F_k = cA_k$. 
By the induction hypothesis, it follows that $c=1$.
\end{proof}

%\begin{remark}  \label{rem1}
%From Lemma \ref{relationAk}, it follows easily that
%$$\sum_{j=1}^{k} \frac{x_1 \cdots x_{k}}{x_j} \dfrac{1}{\prod_{r =1 \atop r \neq j}^{k} (x_r-x_j)}=\frac {F(x_1,\dots , x_{k+1} )}{A_{k+1}}= 1. $$ \vm{Essa observa\c c\~ao ser\'a importante mais tarde? Se sim, n\~ao deveria ser um 'remark'.}
%\end{remark}

%{\color{purple} Posso deixar comentado?
%\begin{align*}
%1 =\dfrac{\sum_{j=1}^{k}\frac{\alpha_1\cdots\alpha_k}{\alpha_j}(-1)^{j+1} \prod_{1 \le s < t \le k \atop t,s \neq j}(\alpha-t-\alpha_s)}{\prod_{1 \le s < t \le k }(\alpha_t - \alpha_s)} = \sum_{j=1}^{k} \frac{\alpha_1 \cdots \alpha_k}{\alpha_j} \frac{1}{\prod_{r=1 \atop r \neq j}^k (\alpha_r-\alpha_j) }
%\end{align*}
%}

\begin{lemma} \label{innerproduct}
Let $C$ be a  circulant matrix  over $\F_q$ with generator vector $(a_0,a_1,\dots, a_{r-1})$ and  $f(x)$ be the  associated polynomial to the matrix $C$. %, where $\gcd(r,p)= 1$. 
Let $g(x) = \gcd(f(x), x^r-1) $ and  $\alpha_1, \alpha_2, \dots, \alpha_m$  the roots of $g(x)$. 
If $g(x)$ has only simple roots, then for each positive integer $j \le m$ the  relation  
\begin{align} \label{eq1}
(a_0,a_1,\dots ,a_{r-j}) \boldsymbol{\cdot} \left(1, h_{1}(\alpha_1, \dots, \alpha_j), \dots , h_{r-j}(\alpha_1, \dots, \alpha_j)\right)= 0
\end{align}
 is satisfied, where $\boldsymbol{\cdot}$ denotes the inner product.

\end{lemma}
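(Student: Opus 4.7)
The plan is to combine two facts about each root $\alpha_l$ of $g(x)$: since $g(x)$ divides both $f(x)$ and $x^r-1$, we have simultaneously $f(\alpha_l)=0$ and $\alpha_l^r=1$. The bridge between these facts and the complete homogeneous symmetric polynomials $h_k$ will be the closed-form expression from Theorem \ref{chsp}.

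Concretely, write $w_l := 1/\prod_{m=1,\,m\neq l}^{j}(\alpha_l-\alpha_m)$. Applying Theorem \ref{chsp} to the $j$-tuple $(\alpha_1,\dots,\alpha_j)$ gives
\[
h_k(\alpha_1,\dots,\alpha_j)=\sum_{l=1}^{j} w_l\,\alpha_l^{\,j-1+k}\qquad\text{for every } k\geq 0,
\]
where the $k=0$ case reduces to the standard Lagrange identity $\sum_{l=1}^{j} w_l\alpha_l^{j-1}=1$. Substituting this into the target inner product and interchanging the order of summation,
\[
\sum_{k=0}^{r-j} a_k\,h_k(\alpha_1,\dots,\alpha_j) \;=\; \sum_{l=1}^{j} w_l\,\alpha_l^{\,j-1}\sum_{k=0}^{r-j} a_k\alpha_l^{\,k}.
\]

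The next step is to invoke $f(\alpha_l)=\sum_{k=0}^{r-1}a_k\alpha_l^{\,k}=0$ in order to rewrite the inner sum as $-\sum_{k=r-j+1}^{r-1}a_k\alpha_l^{\,k}$. After swapping the two sums once more, the identity to be proved reduces to
\[
\sum_{k=r-j+1}^{r-1} a_k\sum_{l=1}^{j} w_l\,\alpha_l^{\,k+j-1} \;=\; 0.
\]
Here I would use $\alpha_l^r=1$: for $r-j+1\leq k\leq r-1$ the exponents satisfy $r\leq k+j-1\leq r+j-2$, so reducing mod $r$ replaces $\alpha_l^{\,k+j-1}$ by $\alpha_l^{\,s}$ with $s\in\{0,1,\dots,j-2\}$. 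It therefore suffices to prove $\sum_{l=1}^{j} w_l\alpha_l^{\,s}=0$ for each $0\leq s\leq j-2$, which is the classical Lagrange interpolation identity (the coefficient of $x^{j-1}$ in the interpolation of $x^s$ through the $j$ nodes $\alpha_1,\dots,\alpha_j$ must vanish when $\deg x^s<j-1$).

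The main obstacle is not analytic but bookkeeping: one must verify that the ``high'' exponents $k+j-1$ land exactly in the range $\{r,\dots,r+j-2\}$, so that after the reduction $\alpha_l^r=1$ they fall precisely in the range where the Lagrange-vanishing identity applies. The corner case $j=1$ is immediate since $h_k(\alpha_1)=\alpha_1^{\,k}$ and the whole inner product collapses to $f(\alpha_1)=0$.
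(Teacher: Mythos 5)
Your proof is correct, but it takes a genuinely different route from the paper's. The paper argues by induction on the number of roots $j$: it takes the $j+1$ relations obtained by omitting one root at a time, forms the combination weighted by $\alpha_j^{r-1}A_{k+1,j}$, and then needs two auxiliary identities (Lemmas \ref{relation} and \ref{relationAk}, together with Remark \ref{rem1}) to show that this combination reproduces the vector $\left(1,h_1,\dots,h_{r-j-1},0\right)$ up to the nonzero factor $A_{k+1}/\alpha$. Your argument is direct and induction-free: expand each $h_k(\alpha_1,\dots,\alpha_j)$ via Theorem \ref{chsp}, interchange sums to isolate $f(\alpha_l)=0$, and absorb the leftover tail $\sum_{k=r-j+1}^{r-1}a_k\alpha_l^{k+j-1}$ using $\alpha_l^r=1$ together with the Lagrange vanishing $\sum_{l=1}^{j} w_l\alpha_l^{s}=0$ for $0\le s\le j-2$ (equivalently, the convention that $h_t=0$ for $t<0$ in the closed form of Theorem \ref{chsp}). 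Your exponent bookkeeping checks out: for $r-j+1\le k\le r-1$ one has $k+j-1\in\{r,\dots,r+j-2\}$, so $s=k+j-1-r\in\{0,\dots,j-2\}$, exactly the vanishing range. What your approach buys is brevity and transparency --- it dispenses with the two preparatory lemmas entirely and makes visible where each hypothesis enters ($\gcd(r,p)=1$ gives distinct roots so the weights $w_l$ are defined; $\alpha_l^r=1$ folds the tail into the vanishing range); what the paper's inductive route buys is that its structure mirrors the successive row operations $B_1,\dots,B_m$ used later in Theorem \ref{rank}. The only point you should state explicitly is the simplicity of the roots of $g$ (it is the first thing the paper's proof records), since your $w_l$ are undefined otherwise.
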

\
\begin{proof} We set $\vec{\alpha\!\!\!\!\alpha}_k =(\alpha_1, \dots, \alpha_{k+1})$.
%Since $\gcd(r,q)=1$ it follows that $\alpha_i \neq \alpha_j$ for all $ i\neq  j.$  
We proceed  by induction on the number of roots of $g$. For any $\alpha$  root of $g$ we have
\[a_0+a_1 \alpha + \cdots + a_{r-1}\alpha^{r-1} = 0 .\]
Then $(a_0, a_1, \dots, a_{r-1}) \boldsymbol{\cdot}(1, \alpha, \dots, \alpha^{r-1}) = 0$  and this relation is equivalent to  the first case of the induction. 
If $j=2$, for each pair of roots $\alpha_1$ and $\alpha_2$,  we have the  relations 
\[ \begin{cases}
a_0\alpha_1+a_1 \alpha_1^2 + \cdots + a_{r-1}\alpha_1^{r} = 0 \\
a_0\alpha_2+a_1 \alpha_2^2 + \cdots + a_{r-1}\alpha_2^{r} = 0.
\end{cases}\]
Subtracting, we get
$$a_0(\alpha_2-\alpha_1)+a_1 (\alpha_2^2 -\alpha_1^2)+ \cdots + a_{r-2}(\alpha_2^{r-1}-\alpha_1^{r-1} )= 0.$$  Since $A_2 = \alpha_2 - \alpha_1 \neq 0$ it follows that
\begin{align*} 0 & =(a_0,a_1,\dots, a_{r-2}) \boldsymbol{\cdot} \left(\alpha_2-\alpha_1, \alpha_2^2-\alpha_1^2, \dots, \alpha_2^{r-1}-\alpha_1^{r-1}\right) \\
& = (a_0,a_1,\dots, a_{r-2}) \boldsymbol{\cdot} A_2\left(1,  h_{1}(\alpha_1,\alpha_2),\dots, h_{r-2}(\alpha_1,\alpha_2)\right)\end{align*} 
and this relation proves  the case and we have the case $j=2$.
Let us suppose now that  \eqref{eq1} is true for any choice of $k$ different roots of $g$ and let  $\alpha_1, \dots, \alpha_{k+1}$ be $k+1$  roots of $g$. By the induction hypothesis, we have $k+1$ equations of the form \eqref{eq1}, where for each one we do not consider one of the roots, i.e., the $j$-th equation is given by
\begin{align} \label{eq2}
(a_0, \dots, a_{r-k}) \boldsymbol{\cdot} \left(1, h_{1,j}(\vec{\alpha\!\!\!\!\alpha}_{k+1}),\dots, h_{r-k,j}(\vec{\alpha\!\!\!\!\alpha}_{k+1}) \right)=0.
\end{align}

Multiplying  the vector $ \left(1, h_{1,j}(\vec{\alpha\!\!\!\!\alpha}_{k+1}),\dots, h_{r-k,j}(\vec{\alpha\!\!\!\!\alpha}_{k+1}) \right)$ for $\alpha_j^{r-1}A_{k+1,j}$ and adding these vectors we obtain the  vector
$$ \vec{u} = \sum_{j=1}^{k+1} \alpha_j^{r-1}  \left( A_{k+1,j}, A_{k+1,j}h_{1,j}(\vec{\alpha\!\!\!\!\alpha}_{k+1}),\dots,  A_{k+1,j} h_{r-k, j}(\vec{\alpha\!\!\!\!\alpha}_{k+1}) \right).$$
%And this implies 
%$$(a_0, \dots, a_{n-k}) \cdot \left(A_{k+1,j}, A_{k+1,j}B_{2,j},\dots, A_{k+1,j}B_{n-k+1,j} \right)=0.$$
By Lemma \ref{relation}, the last coordinate of $\vec u$ is
\begin{align}\label{k+1}
 \displaystyle \sum_{j=1}^{k+1} \alpha_j^{r-1} A_{k+1,j}h_{r-k,j}(\vec{\alpha\!\!\!\!\alpha}_{k+1}) =0.
\end{align}
Let us put  $\alpha = \alpha_1 \cdots \alpha_{k+1}$. The first coordinate  of $\vec u$ is
\begin{align} \label{0} \nonumber
a_0 \displaystyle \sum_{j=1}^{k+1} \alpha_j^{r-1} A_{k+1,j}  & = a_0 \displaystyle \sum_{j=1}^{k+1} \alpha_j^{r-1} \Bigl( (-1)^{j+1} \prod_{1 \le s < t \le k+1 \atop s,t \neq j}(\alpha_t-\alpha_s) \Bigr)\\ 
& = \frac{a_0}{\alpha} \displaystyle \sum_{j=1}^{k+1} \Bigl(\frac{\alpha}{\alpha_j} \prod_{1 \le s < t \le k+1 \atop s,t \neq j}(\alpha_t-\alpha_s) \Bigr)  = \frac{a_0}{\alpha} A_{k+1},\\ \nonumber
\end{align}
where in the last equality we use  Lemma~\ref{relationAk} and the fact that $\alpha_j$'s are $r$-th roots of unity. For $2\le l \le r-k-1$, the $l$-th coordenate of $\vec u$ is equal to  
%$a_l \left(\displaystyle \sum_{j=1}^{k+1} \alpha_j^{n-1} A_{k+1,j} h_{l,j}(k+1)\right) $ 
\begin{align} \label{a_l} \nonumber
a_l \displaystyle \sum_{j=1}^{k+1} \alpha_j^{r-1} A_{k+1,j} h_{l,j}(\vec{\alpha}_{k+1}) & =  a_l \displaystyle \sum_{j=1}^{k+1} \alpha_j^{r-1} \Bigl((-1)^{j+1} \prod_{1 \le s \le t \le k+1 \atop s,t \neq j}(\alpha_t-\alpha_s)  \displaystyle\sum_{i=1 \atop i \neq j}^{k+1} \frac{\alpha_i^{l+k-1}}{\prod_{m=1 \atop m \neq i,j}^{k+1}(\alpha_i-\alpha_m)} \Bigr)\\ \nonumber
& = a_l \displaystyle \sum_{j=1}^{k+1} \displaystyle\sum_{i=1 \atop i \neq j}^{k+1} \bigl(\alpha_j^{r-1} \alpha_i^{l+k-1}(-1)^{j+1}(-1)^{k-i} \displaystyle\prod_{1 \le s \le t \le k+1 \atop s,t \neq i,  j}(\alpha_t-\alpha_s)\epsilon_{i,j}\bigr) \\ 
& = \frac{a_l}{\alpha} \displaystyle \sum_{j=1}^{k+1} \sum_{i=1 \atop i \neq j}^{k+1} \bigl( \frac{\alpha}{\alpha_j} \alpha_i^{l+k-1}(-1)^{k+j-i+1} \displaystyle\prod_{1 \le s \le t \le k+1 \atop s,t \neq i,  j}(\alpha_t-\alpha_s)\epsilon_{i,j}\bigr). \\ \nonumber
\end{align}
Let  $G_{k+1}$ denote the polynomial  $$G_{k+1} = \displaystyle \sum_{j=1}^{k+1} \sum_{i=1 \atop i \neq j}^{k+1} \Bigl( \frac{x_1 \cdots x_{k+1}}{x_j} x_i^{l+k-1}(-1)^{k+j-i+1} \displaystyle\prod_{1 \le s \le t \le k+1 \atop s,t \neq i,  j}(x_t-x_s)\epsilon_{i,j}\Bigr).$$
We observe that for $x_i=x_j$, $i \ne j$, we have $G_{k+1} = 0$ and therefore $(x_i - x_j)$ divides $G_{k+1}$ for all $ i \ne j$. We conclude that $A_{k+1}$ divides  $G_{k+1}$,
%For $x_i = \alpha_i, x_j= \alpha_j$, with $1 \le i < j \le k+1$, and $\alpha_i = \alpha_j$ this polynomial is zero. Then $A_{k+1}$ divides $G(x_1,\cdots, x_{k+1})$,
%$\left(\displaystyle \sum_{j=1}^{k+1} \alpha_j^{n-1} A_{k+1,j} h_{l,j}(k+1)\right) $, 
 we can write
\begin{align*}
\frac{G_{k+1} }{A_{k+1}}  & = \sum_{i=1}^{k+1} \frac{x_1 \cdots x_{k+1} x_i^{l+k-1}}{\prod_{m=1 \atop m \neq i}^{k+1} (x_i-x_m)} \sum_{j=1 \atop j \neq i}^{k+1} \frac{(x_i-x_j)}{\prod_{r=1 \atop r \neq i,j}^{k+1}(x_r-x_j)}\\
& = \sum_{i=1}^{k+1} \frac{x_i^{l+k}}{\prod_{m=1 \atop m \neq i}^{k+1} (x_i-x_m)} \sum_{j=1 \atop j \neq i}^{k+1} \frac{x_1 \cdots x_{k+1}}{x_i x_j }\frac{1}{\prod_{r=1 \atop r \neq i,j}^{k+1}(x_r-x_j)}.\\
\end{align*}
Fixing  $i$, it follows from Lemma \ref{relationAk} that $ \displaystyle\sum_{j=1 \atop j \neq i}^{k+1}\frac{x_1 \cdots x_{k+1}}{x_i x_j }\frac{1}{\prod_{r=1 \atop r \neq i,j}^{k+1}(x_r-x_j)}= 1.$ Therefore 
\begin{align} \label{l} G_{k+1} = A_{k+1}  \sum_{i=1}^{k+1} \frac{x_i^{l+k}}{\prod_{m=1 \atop m \neq i}^{k+1} (x_i-x_m)} = A_{k+1} h_l(k+1).
\end{align} 
By \eqref{a_l} we have $$a_l \displaystyle \sum_{j=1}^{k+1} \alpha_j^{r-1} A_{k+1,j} h_{l,j}(\vec{\alpha\!\!\!\!\alpha}_{k+1}) =  \frac{a_l}{\alpha}A_{k+1} h_l(k+1).$$ From \eqref{k+1}, \eqref{0} and \eqref{l}  we conclude that
\[(a_0, \dots, a_{r-k-1} ) \boldsymbol{\cdot} (1, h_1(k+1),\dots, h_{n-k-1}(k+1) = 0.\]

\end{proof}

\begin{remark} \label{rem2}
\begin{enumerate}
\item  Let  $\lambda$ be a root of $g(x)$. Multiplying $f(\lambda)$ by $\lambda^i$ we obtain $a_{r-i} + a_{r-i+1}\lambda+  \cdots + a_{r-i-1} \lambda^{r-i}=0$ and therefore Lemma \ref{innerproduct} is  true  for any shift of  the coefficients $a_0, a_1 , \dots, a_{r-1} $. 

\item In particular, Lemma \ref{innerproduct} is true if $\gcd(r,q)=1$, since in this case $g(x)$ has only simple roots.
\end{enumerate}
\end{remark}
For what follows, we will need the following definition.

\begin{definition}
Let $f(x)\in \F_q[x]$ be a monic polynomial of degree \(n\) such that \(f(0)\ne 0\). The {\em reciprocal polynomial} $f^{*}$ of the polynomial $f$ is defined by $f^{*}(x)  = \frac{1}{f(0)}x^n f\bigl(\frac1x\bigr)$. The polynomial $f$ is {\em self-reciprocal} if $f = f^{*}$. 
\end{definition}

Now,  we show how to  find the rank and an equivalent reduced matrix to the  circulant matrix $C$.

\begin{theorem}  \label{rank}
Let $A=A(a_0,a_1, \dots, a_{r-1})$ be a circulant matrix  over $\F_q$ and assume $\gcd(r,p)=1.$ If $f(x)$ is the  polynomial associated to $A$ is such that $g(x) = \gcd(f(x), x^r-1) $ is a self-reciprocal polynomial with $\deg g(x) = m$ then  rank$(A)$ is $l = r-m$ and there exists $M\in \text{GL}_r(\F_q)$ such that $MAM^T = \left(\begin{array}
{c|c}
R& 0\\ \hline
0&0
\end{array}\right),$
where $R= (r_{i,j})$ denotes the $l \times l$ matrix defined by $r_{ij} = a_{ij}$  for  $0 \le i,j \le l$ and $M^T$ is the transpose matrix of $M$. 
\end{theorem}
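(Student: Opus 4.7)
The plan is to split the proof into two parts: (a) showing $\mathrm{rank}(C)=l=r-m$, and (b) constructing the congruence matrix $M$ bringing $C$ into the stated block form with upper-left block equal to the principal $l\times l$ submatrix.

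For part (a), I would use the classical eigenvalue description of a circulant matrix. In an appropriate extension of $\F_q$ the matrix $C=f(P)$ (where $P$ is the cyclic shift) is diagonalizable when $\gcd(r,p)=1$, with eigenvalues $f(\omega_1),\dots,f(\omega_r)$ running over the $r$-th roots of unity. Exactly the roots of $g(x)=\gcd(f(x),x^r-1)$ contribute a zero eigenvalue, so $\mathrm{rank}(C)=r-m=l$. In the general case this can be recovered from Lemma \ref{innerproduct} and Remark \ref{rem2}, which already produce $m$ linearly independent relations among the rows of $C$.

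For part (b), I would construct $M$ as a product of elementary row-operation matrices obtained from the $m$ relations furnished by Lemma \ref{innerproduct} (and its shifted versions in Remark \ref{rem2}). Concretely, applying Lemma \ref{innerproduct} with $j=m$ roots and then cycling through all shifts of the coefficient tuple gives $m$ vectors in the left kernel of $C$; these vectors can be arranged so that they express each of the last $m$ rows of $C$ as explicit $\F_q$-linear combinations of the first $l$ rows. Recording those combinations produces an invertible $M\in M_r(\F_q)$, of the block shape $\left(\begin{smallmatrix}I_l&0\\ *&I_m\end{smallmatrix}\right)$ up to a permutation, such that $MC$ has its last $m$ rows equal to zero and its first $l$ rows unchanged. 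In particular the upper-left $l\times l$ block of $MC$ equals the principal $l\times l$ submatrix of $C$, i.e.\ $R$.

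The crucial step is then to verify that multiplying on the right by $M^T$ zeroes out the last $m$ columns without disturbing the $R$-block. This is where the self-reciprocal hypothesis on $g$ enters: $g\mid x^r-1$ together with $g=g^{*}$ forces the set of roots of $g$ to be closed under $\alpha\mapsto\alpha^{-1}=\alpha^{r-1}$, and consequently the left-kernel relations of Lemma \ref{innerproduct} transport to right-kernel relations with the same coefficients. Thus the same matrix $M$ that eliminates rows also eliminates columns via $M^T$, and the principal $l\times l$ block is left untouched, giving the desired form $MCM^T=\left(\begin{smallmatrix}R&0\\0&0\end{smallmatrix}\right)$.

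The main obstacle I foresee is this symmetry argument: turning the self-reciprocal property of $g$ into the precise statement that the row-annihilating combinations produced by Lemma \ref{innerproduct} are \emph{identical} (not merely analogous) to the column-annihilating combinations needed so that $MCM^T$, rather than only $MCM^{-1}$, acquires the block-diagonal shape with the principal submatrix of $C$ as the nonzero block. Once this identification is clean, the rank count from part (a) forces $R$ to be invertible, completing the proof.
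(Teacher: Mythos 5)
Your proposal is correct and follows essentially the same route as the paper: the paper also builds $M$ as a product of elementary matrices whose modified rows encode the annihilating relations of Lemma \ref{innerproduct} (shifted via Remark \ref{rem2}), leaves the first $l$ rows untouched so the principal block survives, and invokes the self-reciprocity of $g$ (roots closed under $\alpha\mapsto\alpha^{-1}$) so that the same $M$ clears the last $m$ columns through $M^T$. The only difference is that you make the rank count explicit via the eigenvalues $f(\omega_i)$ of the circulant, which the paper leaves implicit in its preliminary discussion of $\det C$; this is a harmless (indeed slightly cleaner) addition, since it lets you deduce the invertibility of $R$ rather than assert it.
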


\begin{proof}
Let $\alpha_1, \dots, \alpha_m$ be the roots of $g(x)$. Let  $ B_i$ be  the matrices obtained from the identity matrix  changing the entries of the $r- i+1$-th row by 
\[(1,\, h_1(\alpha_1, \dots, \alpha_i) ,\,\dots, \, h_{r-i}(\alpha_1, \dots, \alpha_i) ,\, 0 ,\,\dots, \, 0).\]
Observe that  $$B_1 = \displaystyle \begin{psmallmatrix}
1&0&\cdots &0&0\\
0&1&\cdots&0&0\\
\vdots&\ddots&\cdots&\ddots&\vdots\\
0&0&\cdots&1&0\\
1&\alpha_1&\cdots&\alpha_1^{r-2}&\alpha_1^{r-1}\\
\end{psmallmatrix},$$  and  since $\alpha_1$ and $\alpha_1^{-1}$ are roots of $g$, the product $B_1AB_1^{T}$ has  last row and column with null entries.
From Lemma~\ref{innerproduct} and Remark~\ref{rem2} it follows that $MAM^T=\left(
\begin{array}{c|c}
R  & 0 \\ \hline
 0 & 0
\end{array}\right)$, where $M=B_m B_{m-1} \cdots B_2 B_1$ and $R$ is the matrix $A$ reduced to its first $l$ rows and $l$ columns.
% Let define   $M=  $, then from Lemma~\ref{innerproduct} and Remark~\ref{rem2} we have that 
%\[MA M^{T} = \begin{pmatrix}
%R & 0 \\
%0&0\\
%\end{pmatrix} ,\]
%where  $R$  is the matrix $C$ reduced to its first $l$ rows and $l$ columns. 

%{\color{red}
%Let $ q \equiv 1 \pmod 4$, we consider the extension $\F_{q^r}$ such that $4$ divide $r$.
%
% Let $\theta \in \F_{q}$ such that $\theta^2 = -1$. Then $(x-\theta)(x-\theta^{-1}) = x^2+1$ and $\gcd(x^2+1, x^r-1) = x^2+1$. The $\F_q$-linearized polynomial is $F(x) = x^{q^2} + x$.
%}
\end{proof}

\begin{example}\label{ex1}
Let $q=27$, $r=7$ and $\Omega_r$ denote the $r$-th cyclotomic polynomial. Since  $\ordem_r\, q  = 2$, $\Omega_r$ splits into three monic irreducible  polynomials over $\F_q[x]$ of  degree $2$. Let $\langle a \rangle = \F_{27}^{*} ,$ where we can choose $a$ with minimal polynomial $x^3+2x+1$.  Then 
\[\Omega_{r}(x) =(x^2+2a^2x+1)(x^2+(2a^2+a+2)x+1)(x^2+(2a^2+2a+2)x+1).  \]
Let us  define
\begin{align*} 
f(x) = & (x^2+2a^2x+1)(x^2+(2a^2+a+2)x+1)(x-a)\\
= & x^5 + x^4(a^2+2)+x^3(a^2+a+1)+x^2(2a+1)+x(a^2+2a)+2a
\end{align*}
and therefore the circulant matrix  associated to the polynomial $f(x)$ is  $$A = \begin{psmallmatrix}
2a&a^2+2a&2a+1&a^2+a+1&a^2+2&1&0\\
0&2a&a^2+2a&2a+1&a^2+a+1&a^2+2&1\\
1&0&2a&a^2+2a&2a+1&a^2+a+1&a^2+2\\
a^2+2&1&0&2a&a^2+2a&2a+1&a^2+a+1\\
a^2+a+1& a^2+2&1&0&2a&a^2+2a&2a+1\\
2a+1&a^2+a+1& a^2+2&1&0&2a&a^2+2a\\
a^2+2a&2a+1&a^2+a+1& a^2+2&1&0&2a\\
\end{psmallmatrix}.$$
Since
\begin{align*}
g(x) = \gcd(f(x), x^r-1)& =  (x^2+2a^2x+1)(x^2+(2a^2+a+2)x+1) \\&= x^4+x^3(a^2+a+2)+x^2(2a^2+a)+x(a^2+a+2)+1\end{align*}
is a self-reciprocal polynomial, it follows from Theorem \ref{rank} that rank$(A)$ is $3$ and the  reduced matrix  associated to $A$ is 
$A' = \begin{psmallmatrix}
2a&a^2+2a&2a+1\\
0&2a&a^2+2a\\
1&0&2a\\
\end{psmallmatrix}$. In addition,   $\det A'= a^4+12a^3+a = a^2 \neq 0$.
\end{example}

\section{The number of affine rational points  of $y^q-y = x  F(x) - \lambda$}

In this section, in order to find the number of affine rational points  of the curve $y^q-y = x  F(x) - \lambda$, where $F(x)$ is a $\F_q$-linearized,  we determine the number of solutions of the equation $\Tr(xF(x)) = \lambda$ in $\F_{q^r}.$ In fact, by \eqref{Npontos}
%we  determine an equivalent equation  which depends only on the variable $x$ and the trace function of the extension $\F_{q^r}$. From Remark \ref{mm} we have that  $\Tr(xF(x))$ defines a quadratic form.
% The objective is to find a matrix associated to this quadratic form and use  well known results about quadratic forms. 
%Let $g(x) = xF(x) - \lambda $ where $\lambda \in \F_q $ and $F(x)=\sum_{i=0}^{l} a_l x^{q^l}.$ 
%we recall that  $\mathcal C_{F,\lambda}$ is the curve defined by the equation 
\begin{align}  \label{curve}
 y^q-y=x  F(x)-\lambda. \end{align}
%We are interested in finding $N_r(\mathcal C_{F,\lambda})$. % the number of affine rational points   of $\mathcal C_{F,\lambda}$ in $\F_{q^r}^2$. 
%Applying $\Tr$  to both sides of \eqref{curve} we obtain 
%\begin{equation} \label{traco}
%\Tr (x F(x)) = r \lambda.
%\end{equation}
%It follows that each rational point $(x_0,y_0) \in \mathcal C_{F,\lambda}(\F_{q^r})$ yields a solution of $\Tr(x_0(F(x_0))= r\lambda$. On the other hand, for  each  solution of $\Tr(x_0(F(x_0))= r\lambda$ we have   $q$ affine rational points  of $\mathcal C_{F,\lambda}(\F_{q^r}^{*})$ of the form $(x_0,y_j)$, where the $y_j$'s are the solutions of the equation $y^q-y = x_0F(x_0) - \lambda.$ Therefore
We recall that we have
%Since $\Tr_{\F_{q^r}/\F_q}(x  F(x))$ defines a quadratic form \vm{todo mundo j\'a sabe disso},  we have that \vm{we have}
$$ N_r(\mathcal{C}_{F,\lambda}) = q S_{\lambda},$$
where  $S_{\lambda} = |\{ x\in \F_{q^r}\mid \Tr(xF(x))=r\lambda \}|$. 
%\vm{finalmente o $S_\lambda$ aparece! Essa \'e a transi\c c\~ao da contagem de um objeto para outro; \'e simples, mas \'e um passo importante no desenvolvimento do artigo e deveria aparecer com destaque, pelo menos em uma proposi\c c\~ao separada.}

In that follows,   $\mathcal P$ denotes the $r \times r $ cyclic permutation matrix  
\begin{equation}\label{matrizpermutacao}
\mathcal P = \begin{psmallmatrix}
0&1&0& \cdots & 0\\
0&0&1& \cdots &0 \\
\vdots& \cdots & \ddots & \cdots &\vdots\\
0&0&0& \cdots &1\\
1&0&0& \cdots & 0
\end{psmallmatrix}.
\end{equation}
 %and we set  $\mathcal{P}_l = \mathcal{P}^l$  for each  non-negative integer $l$. %\vm{Fica melhor 'Let $\mathcal P$ denote \ldots; we set  $\mathcal{P}_l = \mathcal{P}^l$\ldots'. Essa  nota\c c\~ao s\'o se explica por motivos tipogr\'aficos (${\mathcal P}^{l^T}$ fica feio). Seria bom  informar isso para o leitor, algo como 'For aesthetic/typographical (escolha) reasons, we write $\mathcal{P}_l = \mathcal{P}^l$'. Por outro lado, simplifica a vida esquecer tudo isso e escrever $\mathcal{P}^l+\left(\mathcal{P}^l\right)^{^{\ }_T}$ (gambiarra no \LaTeX). Voto a favor dessa \'ultima op\c c\~ao.}
 The following proposition associates the $\F_q$-linearized  polynomial $F(x)$ with an appropriated circulant matrix. 
%For  $\Gamma = \{\alpha_1, \alpha_2, \dots, \alpha_r \}$ a basis of $\F_{q^r} $ over $\F_q$ we define the matrix $M_{\Gamma}  = \begin{psmallmatrix}
%\alpha_1 & \alpha_1^q &\cdots  &\alpha_1^{q^{r-1}} \\
%\alpha_2 & \alpha_2^q & \cdots & \alpha_2^{q^{r-1}} \\
%\vdots & \cdots & \ddots & \vdots \\
%\alpha_r & \alpha^q & \cdots & \alpha_r^{q^{r-1}}
%\end{psmallmatrix}. $

\begin{proposition} \label{quadform}
Let $F(x) = \sum_{i=0}^{l} a_l x^{q^l}$ be  $\F_q$-linearized. For $\lambda \in \F_q$, the number of solutions of $\Tr(xF(x))=r \lambda$ in $\F_{q^r}$ is equal to the number of solutions \(\vec z=(z_1,z_2,\dots,z_n)^T\in \F_{q}^r\) of the quadratic form 
\[{\vec z\,}^T A \vec z= r\lambda\]
%\[\left(z_1\,\, z_2 \,\, \cdots \,\, z_r \right) A  \left(\begin{array}{c}
%z_1\\
%z_2 \\
%\vdots \\
%x_r
%
%\end{array} \right)= r \lambda, \] 
where  $A =  \frac{1}{2} \sum_{i=0}^{l} a_i(\mathcal{P}^i+\left(\mathcal{P}^i\right)^{^{\ }_T}). $ 
\end{proposition}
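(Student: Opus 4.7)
The strategy is to pass to $\F_q$-coordinates in which the $q^i$-Frobenius acts as a cyclic shift and the symmetric bilinear form $(x,y)\mapsto\Tr(xy)$ becomes the standard inner product, so that the expression $\Tr(xP(x))$ translates literally into the quadratic form with matrix $A$. Concretely, I would pick a self-dual normal basis $\{\beta_j=\alpha^{q^{j-1}}\}_{j=1}^{r}$ of $\F_{q^r}$ over $\F_q$, that is, a normal basis satisfying $\Tr(\beta_j\beta_k)=\delta_{jk}$, and transport the problem via the $\F_q$-linear bijection $x=\sum_j z_j\beta_j\leftrightarrow\vec z=(z_1,\dots,z_r)^T$.

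Because $\beta_j^{q^i}=\beta_{j+i}$ (indices modulo $r$), the coordinate vector of $x^{q^i}$ is $\mathcal P_i^T\vec z$. Self-duality then yields
\[\Tr(x\cdot x^{q^i})=\sum_{j,k}z_j z_{k-i}\Tr(\beta_j\beta_k)=\sum_j z_j z_{j-i}=\vec z^T\mathcal P_i^T\vec z,\]
and since this is a scalar it equals its own transpose $\vec z^T\mathcal P_i\vec z$, hence also $\vec z^T\bigl(\tfrac12(\mathcal P_i+\mathcal P_i^T)\bigr)\vec z$. Summing over $i$ with weights $a_i$ and using $\F_q$-linearity of $\Tr$ gives $\Tr(xP(x))=\vec z^T A\vec z$; the bijection $x\leftrightarrow\vec z$ then matches the solution sets of the two equations and so the two counts agree.

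The only delicate point is the existence of a self-dual normal basis. By Lempel--Seroussi such a basis exists precisely when $r$ is odd or $q$ is even, so the case $q$ odd with $r$ even, relevant to parts of Theorem~\ref{th2}, is not immediately covered. In that case I would start instead from an arbitrary normal basis: the trace-Gram matrix $T=(\Tr(\beta_j\beta_k))$ is a symmetric circulant, and the same calculation then produces the Gram matrix $TA$ in place of $A$. Since $T$ and $A$ both lie in the commutative circulant algebra $\F_q[x]/(x^r-1)$ they commute, so $TA$ remains symmetric, and it suffices to show that $\vec z^T(TA)\vec z$ and $\vec z^T A\vec z$ have the same level sets over $\F_q$. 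I would try to produce a circulant square root $S\in\F_q[x]/(x^r-1)$ of $T$; then the substitution $\vec u=S\vec z$ converts $\vec z^T(TA)\vec z=\vec z^T SAS\vec z$ into $\vec u^T A\vec u$. Establishing this square-root statement, or otherwise verifying the congruence of $TA$ and $A$ as quadratic forms over $\F_q$, is where I expect the main obstacle to lie.
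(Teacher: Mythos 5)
Your argument takes a different route from the paper's, and it is the more careful one. The paper expands $\Tr(x\cdot x^{q^i})$ in an arbitrary basis $\Gamma=\{\beta_1,\dots,\beta_r\}$, obtains the Gram matrix $\tfrac12 M_\Gamma(\mathcal P_i+\mathcal P_i^T)M_\Gamma^T$ with $M_\Gamma=(\beta_j^{q^{k-1}})_{j,k}$, and then passes to $A$ via the substitution $(z_1,\dots,z_r)=(x_1,\dots,x_r)M_\Gamma$, asserting that this preserves the number of solutions. But $M_\Gamma$ has entries in $\F_{q^r}$, not in $\F_q$, so this only exhibits a congruence of the two quadratic forms over $\F_{q^r}$, which in general does not preserve the number of $\F_q$-points. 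Your self-dual normal basis makes the Gram matrix literally equal to $A$ and settles the case $r$ odd completely (alternatively, for $r$ odd a nonsquare of $\F_q$ remains a nonsquare in $\F_{q^r}$, so rank and discriminant are still detected after base change and the paper's congruence does descend to $\F_q$).

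For $r$ even, the obstacle you flag at the end is genuine and cannot be overcome, because the statement itself fails there. From $T=M_\Gamma M_\Gamma^T$ one gets $\det T=(\det M_\Gamma)^2$, and since the entrywise Frobenius permutes the columns of $M_\Gamma$ by an $r$-cycle, $(\det M_\Gamma)^{q-1}=(-1)^{r-1}=-1$; hence $\det M_\Gamma\notin\F_q$ and $\det T$ is a nonsquare in $\F_q$, so no square root of $T$ over $\F_q$ (circulant or otherwise) can exist, and $TA$ and $A$ are in general inequivalent over $\F_q$. Concretely, take $q=3$, $r=2$, $P(x)=x^q-x$, $\lambda=1$: writing $x=a+b\alpha$ with $\alpha^2=-1$ one finds $\Tr(x^{q+1}-x^2)=b^2$, so $\Tr(xP(x))=2\lambda=2$ has no solutions in $\F_9$, whereas $A=\mathcal P_1-\mathrm{Id}$ gives ${\vec z\,}^T A\vec z=-(z_1-z_2)^2=2$, which has $6$ solutions in $\F_3^2$. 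So for even $r$ the count must be corrected by the (nonsquare) discriminant class of $\det T$; as written, both your plan and the paper's proof stop exactly where the proposition stops being true.
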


\begin{proof}
Let   $\Gamma = \{ \beta_1, \dots , \beta_r \}$ be a basis of $\F_{q^r}$ over $\F_q$ and  $$N_{\Gamma}  = \begin{psmallmatrix}
\beta_1 & \beta_1^q &\cdots  &\beta_1^{q^{r-1}} \\
\beta_2 & \beta_2^q & \cdots & \beta_2^{q^{r-1}} \\
\vdots & \vdots & \ddots & \vdots \\
\beta_r & \beta_r^q & \cdots & \beta_r^{q^{r-1}}
\end{psmallmatrix}. $$ Then $N_{\Gamma}$ is  an invertible  matrix and for  $x \in \F_{q^r}$ we can write  $x = \sum_{j=1}^{r} \beta_jx_i$,  where $x_1, \dots, x_r \in \F_q$. The equation  $\Tr(x  F(x)) = r\lambda$ is equivalent to \begin{equation} \label{trace}
\sum_{j=0}^{r-1} x^{q^j}   F(x)^{q^j} = r \lambda.
\end{equation}
Since $F(x)$ is a $\F_q$-linearized  and the trace  is $\fq$-linear, we need to express the monomials of the form  $x\cdot x^{q^l}$ in terms of the basis $\Gamma$. We have

% \vm{Estou confuso. 'it is enough' significa que em algum momento, mesmo implicitamente,  se disse algo do tipo 'we want to show that'; a express\~ao completa \'e do tipo 'in order to do this, it is enough to\ldots'. S\'o consigo imaginar que tem a ver com considerar mon\^omios em $\Tr(x\cdot F(x))=r \lambda$, mas n\~ao vejo como reduzir as solu\c c\~oes dessa equa\c c\~ao a parcelas monomiais.
%
%Acho (achismo mesmo!) que a ideia \'e escrever $\Tr(xF(x))=r \lambda$ com combina\c c\~ao linear dos $\tilde L_l(z)$ e  $P$ como a mesma combina\c c\~ao linear dos $\mathcal{P}_l$.  } \\ to consider  the monomials \vm{'consider monomials'} of the form  $x\cdot x^{q^l}$.
%
% Let us denote \vml{Let} $L_l(x) = \Tr(x \cdot x^{q^l})$, that can be rewritten as \vm{esse 'that can be rewritten as' n\~ao est\'a legal; 'we then have' \'e suficiente.
%
%A nota\c c\~ao $L_l(x)$ dura apenas tr\^es linhas, deixar $\Tr(x \cdot x^{q^l})$ n\~ao seria suficiente?}
\begin{align*}
\Tr(x^{q^l+1}) =\sum_{j=0}^{r-1} x^{q^j} \cdot  (x^{q^l})^{q^j}   = \sum_{j=0}^{r-1}\left( \sum_{s=1}^{r} \beta_s x_s\right)^{q^j} \left( \sum_{k=1}^{r} \beta_k x_k  \right)^{q^{j+l}}
= \sum_{s,k=1}^{r} \Biggl( \sum_{j=0}^{r-1}  \beta_s^{q^j} \beta_k^{q^{j+l}}\Biggr) x_sx_k .
\end{align*}
Consequently  $\Tr(x^{q^l+1})$ has the following symmetric representation 
\[ (x_1 \,\, x_2 \,\, \cdots \,\, x_r)  B_l  \begin{pmatrix}
x_1\\
x_2 \\
\vdots \\
x_r
\end{pmatrix}, \  \text{ where } B_l = \frac{1}{2} N_{\Gamma}(\mathcal P^l+\left(\mathcal{P}^l\right)^{^{\ }_T})N_{\Gamma}^T.\]
 Making the change of variables $(z_1 \,\, z_2\,\, \cdots\,\, z_r) = (x_1\,\, x_2\,\, \cdots\,\,x_r) N_{\Gamma}$ we get
%  an equivalent system
% \vm{N\~ao vejo em nenhum lugar um sistema com o mesmo n\'umero de solu\c c\~oes, apenas uma maneira equivalente de escrever $L_l$.}
\[ \begin{pmatrix}z_1 \,\, z_2 \,\, \cdots \,\, z_r \end{pmatrix} \left[ \frac{1}{2} (\mathcal P^l+ \left(\mathcal{P}^l\right)^{^{\ }_T}) \right] \begin{pmatrix}
z_1\\
z_2 \\
\vdots \\
z_r
\end{pmatrix}\]
which has the same number of solutions of $\Tr(x^{q^l+1})$. 
Using  equation \eqref{trace} and  the  definition of  $A $%=\frac{1}{2} \sum_{i=0}^{l} a_i (\mathcal P_i+\mathcal P_i^T) $
, the result follows. 
\end{proof}

The following theorem  is straightforward consequence  of   Theorems \ref{rank} and  \ref{sol} and Proposition \ref{quadform}.

\begin{theorem}\label{thF(x)}
 Let $F(x) = \sum_{i=0}^{l} a_i x^{q^i}$ be $\F_q$-linearized  and  $f(x) =\sum_{i=0}^{l} a_i x^{i}$ its associated polynomial. We assume $\gcd(r,p)=1$ and that $g(x) = \gcd(f(x), x^r-1)$ is a self-reciprocal polynomial of degree $m$. Let also $R$ be the matrix  defined as in  Theorem \ref{rank} and $a = \text{det } R$. Then for each $\lambda \in \F_q$, the number of affine rational points   in  $\F_{q^r}^2$ of the curve $y^q-y = x F(x) - \lambda$   is  
$$N_r(\mathcal C_{F,\lambda})= \begin{cases}
q^r -    q^{(r+m-2)/2}\chi((-1)^{(r-m)/2}a),& \text{ if $r+m$ is even and $\lambda \neq 0$;}\\
q^r +  (q-1) q^{(r+m-2)/2}\chi((-1)^{(r-m)/2}a),& \text{ if $r+m$ is even and $\lambda=0$;}\\
q^r + q^{(r+m-1)/2}\chi((-1)^{(r-m-1)/2}2 r \lambda a),& \text{ if $r+m$ is odd.}
\end{cases}
$$
%where $\omega(b)=\begin{cases} -1, & \text{if $b\ne 0$}\\
% q-1, & \text{if $b= 0.$}
%\end{cases}$
\end{theorem}

\begin{corollary}
Let $F$, $f$ and $g$  be  polynomials which satisfy the conditions of Theorem \ref{thF(x)}.  Then 
$$
|N_r(\mathcal C_{F,\lambda})-q^r|\le (q-1)q^{\frac{r+m-2}2}. $$
In addition, the upper (resp. lower) bound is attained if and only if  $r+m$ is even, $\lambda=0$ and $(-1)^{\frac{r-m}2}a$ is  (resp. not) a square in $\F_q$.
\end{corollary}

\begin{remark}
The curve 
$\mathcal C_{F,\lambda}$,
 where $F(x) = \displaystyle \sum_{i=0}^l a_i x^{q^i},a_i \in \F_q$ and $0\le l <r,$ has genus $g = \frac{(q-1)q^l}2$.
The Hasse-Weil bound for $\mathcal C_{F,\lambda}$ is given by 
\[|N_r(\mathcal C_{F,\lambda})-q^r|\le (q-1)q^{\frac{2l +r}2}. \]
Consequently, this curve is not maximal (resp. minimal) with respect to this bound. 

\end{remark}

\begin{example}
Let $q=27, r= 7$ and $f(x)$,  $g(x)$ be the polynomials of Example \ref{ex1}. The polynomial  $F(x) = x^{q^5}+(a^2+2)x^{q^4}+(a^2+a+1)x^{q^3}+(2a+1)x^{q^2}+(a^2+2a)x^q+2ax$ is the $\F$-linearized of $f(x)$.  Since $r-m$ is odd and  $\det C' = a^2$, we get from Theorem  \ref{thF(x)} 
%we have that $N_r(\mathcal C_{F,\lambda})$ is 
\[N_r(\mathcal C_{F,\lambda}) = q^7 +q^6 \chi(\lambda)=\begin{cases}
q^7+q^6, & \text{if $\lambda $ is a square in $\F_q^*$};\\
q^7-q^6, & \text{if $\lambda $ is not a square in $\F_q^*$};\\
q^7, &\text{if $\lambda=0 $.}
\end{cases}
\]
%\vm{Esse par\'agrafo fica bem melhor como 'Since $r-m$ is odd and  $\det C' = a^2$, we get from Theorem \ref{thF(x)}' e colocando $N_r(\mathcal C_{F,\lambda})=q^7 +q^6 \chi(\lambda)=\ldots$ a seguir.}

% Therefore if $\lambda $ is a square in $\F_q^*$ the number of affine rational points  is $q^7+q^6$ and if  $\lambda $ is not a square in $\F_q$ the number of affine rational points  is $q^7-q^6$. Besides that, if $\lambda =0$ the number of affine rational points  is $q^7$.
\end{example}

%{\color{red}
%
%\begin{theorem} Let $\mathcal C$ be the surface defined by the equation
%\begin{equation}\label{artin}
%\mathcal C : y^q - y =g_1(x_1)+\cdots+g_s(x_s)-\lambda
%\end{equation}
%where $g_j(x_j)=x_j P_j(x_j)$, e $P_j(x_j)$ is a $\F_q$-linearized polynomial for each $j=1,2,\dots,s$. 
%The number of $\F_{q^r}$ rational points  of $\mathcal C$ is:
%
%
%\end{theorem}
%}
In the following section we use Theorem  \ref{sol} to compute $D$ for some some special polynomials  $F(x)$.

\section{The number of affine rational points  of $y^q-y = x \cdot(x^{q^i}-x)-\lambda $} \label{sec4}
Throughout this section, for any prime $t$  and a positive integer $n$, we denote by $\left(\frac{n}{t} \right)$  the Legendre symbol  and by $\nu_t(n)$ the $t$-adic valuation of $n$, i.e., the maximum power of $t$ that divides $n$.  The objective of this section is to find an expression  for  the number of affine rational points   of the curve $y^q-y = x^{q^i+1}-x^2 - \lambda$ in $\F_{q^r}^2$ in terms  of  valuation functions  and Legendre symbols.

In the previous section, we used the fact that the number $N_r(\mathcal C_{F,\lambda})$, for $F(x)$   a $\F_q$-linearized polynomial and $\lambda \in \F_q$, is $q$ times  the number of elements $x \in \F_{q^r}$ such that $\Tr(xF(x))= r \lambda $.  

In  order to determine the number of solutions of $\Tr(xF(x)) = r \lambda$, it is necessary to establish the dimension of the symmetric bilinear form associated to this quadratic form, which is the content of the next proposition.

%\vm{Sugeri antes que todos os resultados gerais sobre formas quadr\'aticas fossem concentrados em um \'unico lugar. Parece-me ser esse o caso tamb\'em para os resultados sobre $\Tr(x \cdot F(x))$.}

\begin{proposition} \label{dimgeral}
Let $0<i<r$ be integers and $F(x) = \sum_{j=0}^{i} a_jx^{q^j}$ a $\F_q$-linearized. Let  $\Phi_F(x) = \Tr(x F(x))$.
% be a quadratic form over $\F_{q^r}$
% \vm{$\Phi_i(x)$ n\~ao \'e 'a' (tradu\c c\~ao: uma) quadratic form, pois \'e determinada por $F(x)$. E tamb\'em todo mundo j\'a sabe que $\Tr(x \cdot F(x))$ \'e uma forma quadr\'atica\ldots Chatice minha: acho estranha a nota\c c\~ao $\Phi_i$, que \'e associada apenas ao grau de $F(x)$; n\~ao seria melhor $\Phi_F$, assim como $\varphi_F$?} and $\varphi_F(x,y)$ the symmetric bilinear form associated to $\Phi_F$. 
If $a_0 \neq 0$, then
\begin{equation} \label{dimP}
\dim \text{rad}(\varphi_F) = \deg  \Bigl(\gcd\Bigl(\sum_{j=0}^{i}a_j (x^{j} +x^{r-j}), x^{r}-1 \Bigr)\Bigr).
\end{equation}
\end{proposition}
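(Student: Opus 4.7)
The plan is to compute the symmetric bilinear form $\varphi_i$ explicitly, reformulate the kernel condition as the kernel of an $\F_q$-linear operator on $\F_{q^r}$, and then translate the resulting linear algebra problem into a polynomial gcd via the normal basis theorem.

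First I would expand $\Phi_i(x+y)-\Phi_i(x)-\Phi_i(y)$. Since $P$ is $\F_q$-linearized this gives $\varphi_i(x,y)=\Tr(xP(y))+\Tr(yP(x))$, and applying the Frobenius-invariance of the trace together with $x^{q^r}=x$ and $a_j\in\F_q$ rewrites each summand $a_j\Tr(yx^{q^j})$ as $a_j\Tr(y^{q^{r-j}}x)$. Consequently
\[\varphi_i(x,y)=\Tr\bigl(x\cdot Q(y)\bigr),\qquad Q(y):=\sum_{j=0}^{i}a_j\bigl(y^{q^j}+y^{q^{r-j}}\bigr).\]
Because the trace pairing on $\F_{q^r}$ is nondegenerate, $y\in \ker\varphi_i$ iff $Q(y)=0$, so $\dim\ker\varphi_i=\dim_{\F_q}\ker Q$.

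Next, by the normal basis theorem, $\F_{q^r}$ is a free module of rank one over $\F_q[F]\cong \F_q[x]/(x^r-1)$, where $F\colon y\mapsto y^q$ denotes Frobenius. Under this identification, $Q$ corresponds to multiplication by the residue class of $q(x):=\sum_{j=0}^{i}a_j(x^j+x^{r-j})$ in $R:=\F_q[x]/(x^r-1)$, so $\dim_{\F_q}\ker Q$ equals the $\F_q$-dimension of the annihilator of $q(x)$ in $R$. Setting $g(x)=\gcd(q(x),x^r-1)$ in $\F_q[x]$ and writing $q=gh$, $x^r-1=gk$ with $\gcd(h,k)=1$, the divisibility $(x^r-1)\mid q f$ forces $k\mid f$; hence the annihilator of $q(x)$ in $R$ is the principal ideal generated by $(x^r-1)/g(x)$, whose $\F_q$-dimension is $\deg g$. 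This matches the right-hand side of \eqref{dimP}.

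The main subtlety I expect is the module-theoretic identification of $Q$ with the class of $q(x)$ in $R$, together with the collapse of the $j=0$ contribution (which has degree $r$ as a polynomial) modulo $x^r-1$; this reduction does not alter $\gcd(q(x),x^r-1)$ since one may reduce modulo $x^r-1$ before taking the gcd. The hypothesis $a_0\ne 0$ does not appear necessary for the dimension equality itself, and presumably serves to exclude degenerate situations in later applications of the proposition.
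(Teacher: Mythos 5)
Your proposal is correct and follows essentially the same route as the paper: both compute $\varphi_i(x,y)=\Tr\bigl(x\sum_{j=0}^{i}a_j(y^{q^j}+y^{q^{r-j}})\bigr)$ via Frobenius-invariance of the trace, invoke nondegeneracy of the trace pairing to identify $\ker\varphi_i$ with the kernel of the linearized polynomial $Q$, and then equate $\dim_{\F_q}\ker Q$ with $\deg\gcd\bigl(\sum_j a_j(x^j+x^{r-j}),x^r-1\bigr)$. The only difference is in the last step, where the paper appeals to the standard correspondence between gcds of $\F_q$-linearized polynomials and gcds of their conventional associates, while you justify the same dimension count through the normal basis theorem and the annihilator ideal in $\F_q[x]/(x^r-1)$; your closing observations (that reduction of the $j=0$ term modulo $x^r-1$ does not change the gcd, and that $a_0\neq 0$ is not actually needed) are both accurate.
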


\begin{proof}
In order to  determine the dimension of the radical of $\varphi_F$ it is sufficient  to compute the dimension of the radical
%l of the symmetric bilinear form $\varphi_F(x,y)$, i.e., 
%\vm{j\'a comentei essa defini\c c\~ao anteriormente; aqui noto apenas que n\~ao \'e necess\'ario repet\'\i-la}
$$\dim_{\fq}~\{x \in \F_{q^r}| \varphi_F(x,y) = 0 \text{ for all } y \in \F_{q^r}\}.$$ 
In fact %calculating $\varphi(x,y)$
\begin{align}\label{q1}
\nonumber
\varphi(x,y)&  = \Tr \Bigl(  \sum_{j=0}^{i} a_j(x+y)^{q^j+1} -\sum_{j=0}^{i} a_jx^{q^j+1} -  \sum_{j=0}^{i} a_jy^{q^j+1}\Bigr) \\ \nonumber
& = \sum_{l=0}^{r-1}  \bigl( \sum_{j=0}^{i} a_j(x+y)^{q^{j+l}+q^l} - \sum_{j=0}^{i} a_jx^{q^{j+l}+q^l} -  \sum_{j=0}^{i} a_jy^{q^{j+l}+q^l }\bigr) \\ \nonumber
& = \sum_{j=0}^{i} a_j   \bigl( \sum_{l=0}^{r-1}  x^{q^{j+l}}y^{q^l} +x^{q^l}y^{q^{j+l}}\bigr) \\
& = \sum_{j=0}^{i} a_j \bigl(\sum_{j=0}^{r-1} ((x^{q^j}+x^{q^{r-j}})y)^{q^l} \bigr)\nonumber\\ 
& = \sum_{j=0}^{i} a_j \Tr((x^{q^j}+x^{q^{r-j}})y) = \Tr\bigl( \sum_{j=0}^{i} a_j(x^{q^j}+x^{q^{r-j}})y \bigr).
\end{align} 
 It follows that $\varphi_F(x,y) =0$ for all $y \in \F_{q^r}$ is equivalent to
\begin{align}\label{d1}
 \sum_{j=0}^{i} a_j(x^{q^j}+x^{q^{r-j}}) = 0.
\end{align}
The  $\F_q$-linear subspace of $\F_{q^r}$ determined by  \eqref{d1} is the set of roots of $$g(x) =\gcd(H(x), x^{q^{r}}-x  ), \text{ where } H(x)= \sum_{j=0}^{i} a_j(x^{q^j}+x^{q^{r-j}}). $$ Since $g$ is a $\F_q$-linearized, the degree of the associated polynomial  gives us the dimension of  radical of $\varphi_i$, which is  the degree of   $\gcd(x^{r} - 1,  \sum_{j=0}^{i} a_j(x^{^j}+x^{{r-j}}))$. This finishes the proof. \end{proof} 

\subsection{ The special case  $F(x) = x^{q^i} -x$.}
For this case we  explicitly determine the dimension of the quadratic form $\Tr(xF(x)).$ Besides that, we will use this information to compute $N_r(\mathcal C_{F,\lambda})$, that is given in Theorem \ref{th2}.  In order to simplify the notations, we define the following quadratic form.
\begin{definition}
Let $i,r$ be integers such that $0< i<r$. We define 
\begin{align*} \Phi_i: \F_{q^r} &\to \fq\\
 x &  \mapsto \Tr(x^{q^i+1}-x^2).\\ 
\end{align*}
\end{definition}

The following corollary is consequence of Proposition \ref{dimgeral}.
%{\color{red}
%\begin{corollary}\label{dimensao}
%Let  $\Phi_i: \mathbb{F}_{q^r} \to \fq$ given by $\Tr(x^{q^i+1}-x^2)$ , where  $0<i < r$, and $\varphi_i(x,y) $  its associated symmetric bilinear form.  Let $r=p^u \tilde{r} $ and  $i=p^s \tilde{i}$, where $u,s$ are non-negative  integers such that  $\gcd(p,\tilde{r} )= \gcd(p, \tilde{i})=1$. Then
%\begin{equation} \label{dim}
%\dim \ker(\varphi_i) = \gcd(\tilde{r}, \tilde{i}) \min(p^u,2p^s).
%\end{equation}
%\end{corollary}}

\begin{corollary}\label{dimensao}
Let $i,r$ be integers such that $0<i<r$ and   $\varphi_i(x,y) $  the associated symmetric bilinear form of $\Phi_i$.  Let $r=p^u \tilde{r} $ and  $i=p^s \tilde{i}$, where $u,s$ are non-negative  integers such that  $\gcd(p,\tilde{r} )= \gcd(p, \tilde{i})=1$. Then
\begin{equation} \label{dim}
\dim \text{rad }(\varphi_i) = \gcd(\tilde{r}, \tilde{i}) \min(p^u,2p^s).
\end{equation}
\end{corollary}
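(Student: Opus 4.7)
The plan is to apply Proposition \ref{dimgeral} with $P(x) = x^{q^i} - x$, for which only $a_0 = -1$ and $a_i = 1$ are nonzero. The polynomial $\sum_j a_j(x^j + x^{r-j}) = -(1+x^r) + x^i + x^{r-i}$ reduces modulo $x^r - 1$ to $h(x) := x^i + x^{r-i} - 2$, so
$$\dim \ker(\varphi_i) = \deg \gcd\bigl(h(x),\,x^r - 1\bigr).$$
Since $\gcd(\tilde r, p) = 1$, the Frobenius identity gives $x^r - 1 = (x^{\tilde r} - 1)^{p^u}$ with $x^{\tilde r} - 1$ separable, so every common root of $h$ and $x^r - 1$ is a $\tilde r$-th root of unity and appears in $x^r - 1$ with multiplicity $p^u$. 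For such a $\zeta$ one has $\zeta^{r-i} = \zeta^{-i}$, so $h(\zeta) = 0$ is equivalent to $(\zeta^i - 1)^2 = 0$, i.e.\ $\zeta^i = 1$. Combined with $\zeta^{\tilde r} = 1$ and $\gcd(p,\tilde r) = 1$ (which collapses $\gcd(i,\tilde r)$ to $\gcd(\tilde i, \tilde r)$), this yields exactly $d := \gcd(\tilde r, \tilde i)$ distinct common roots.

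The heart of the argument is computing the multiplicity $\mu$ of each such $\zeta$ as a root of $h$. I would substitute $x = \zeta + y$ and apply $(a+b)^{p^k} = a^{p^k} + b^{p^k}$ with $k := \min(u,s)$ to obtain, with $Y := y^{p^k}$ and $\eta := \zeta^{p^k}$,
$$h(\zeta + y) = (\eta + Y)^A + (\eta + Y)^B - 2,$$
where $(A,B) = (p^{s-u}\tilde i,\ \tilde r - p^{s-u}\tilde i)$ when $u \le s$, and $(A,B) = (\tilde i,\ p^{u-s}\tilde r - \tilde i)$ when $u > s$. A direct check gives $\eta^A = \eta^B = 1$ in both cases, so the coefficient of $Y^j$ in the expansion is $\eta^{-j}\bigl(\binom{A}{j} + \binom{B}{j}\bigr)$. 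If $u \le s$ the linear coefficient is $\eta^{-1}(A+B) = \eta^{-1}\tilde r \not\equiv 0 \pmod p$, giving $\mu = p^u$. If $u > s$ then $A+B = p^{u-s}\tilde r \equiv 0 \pmod p$ kills the linear term, and using $B \equiv -A \pmod p$ the quadratic coefficient reduces to $\eta^{-2}\tilde i^2 \not\equiv 0 \pmod p$, giving $\mu = 2p^s$.

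The multiplicity of $\zeta$ in the gcd is $\min(\mu, p^u)$. Since $p \ge 3$, we have $p^u \le 2p^s$ when $u \le s$ and $p^u \ge 3p^s > 2p^s$ when $u > s$, so in both regimes this equals $\min(p^u, 2p^s)$. Summing over the $d$ distinct common roots gives $\gcd(\tilde r, \tilde i)\min(p^u, 2p^s)$, matching \eqref{dim}. The main obstacle is the multiplicity analysis in the regime $u > s$: one needs to verify that the linear coefficient of the Taylor expansion vanishes modulo $p$ while the quadratic one survives, which boils down to the identity $\binom{A}{2}+\binom{B}{2}\equiv A^2 \pmod p$ and relies essentially on the hypothesis that $p$ is odd.
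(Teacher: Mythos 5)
Your proof is correct, and it starts from the same place as the paper (Proposition \ref{dimgeral} applied to $a_0=-1$, $a_i=1$, reducing the problem to $\deg\gcd(x^i+x^{r-i}-2,\,x^r-1)$), but the heart of the computation is carried out quite differently. The paper simply multiplies by the unit $x^i$ to rewrite the relevant polynomial as $(x^i-1)^2=(x^{\tilde i}-1)^{2p^s}$, notes $x^r-1=(x^{\tilde r}-1)^{p^u}$, and then reads off $\gcd\bigl((x^{\tilde i}-1)^{2p^s},(x^{\tilde r}-1)^{p^u}\bigr)=(x^{\gcd(\tilde r,\tilde i)}-1)^{\min(p^u,2p^s)}$ as a gcd of perfect powers of separable polynomials --- a one-line identity that gives the degree immediately. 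You instead perform a local analysis at each common root $\zeta$, extracting the multiplicity via the Taylor expansion in $Y=y^{p^{\min(u,s)}}$ and the identity $\binom{A}{2}+\binom{B}{2}\equiv A^2\pmod p$. Your route is longer but entirely sound, and it has the merit of making explicit both the origin of the two regimes ($p^u$ versus $2p^s$) and the precise point where oddness of $p$ enters (invertibility of $2$ in the quadratic coefficient); the paper's route buys brevity, and you could have obtained it by observing that your $h(\zeta+y)$ computation is just the expansion of $\zeta^{-i}\bigl((\zeta+y)^i-1\bigr)^2=\zeta^{-i}\bigl((\zeta^{p^s}+y^{p^s})^{\tilde i}-1\bigr)^2$, whose order of vanishing is visibly $2p^s$ times the order of vanishing of the separable factor, capped at $p^u$ by the other argument of the gcd.
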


\begin{proof}
By Proposition \ref{dimgeral} it is enough to find the dimension of the linear  space determined by the roots of
\begin{align} \label{z1}
H(x)=\gcd(x^{q^i} + x^{q^{r-i}} -2x, x^{q^r}-x).% = \gcd(x^{2q^i}-2x^{q^i} +x, x^{q^r}-x). 
\end{align}
Since $r=p^u \tilde{r} $, $i=p^s \tilde{i}$ , the associated polynomial to the $\F_q$-linearized polynomial  $H(x)$ % Equation \eqref{z1} 
is 
\begin{align*}
h(x)&= \gcd \left(x^{i} -2+x^{r-i} , x^r-1 \right) =\gcd \left(x^{2i} -2x^i+x^{r} , x^r-1 \right) \\
&= \gcd((x^{\tilde{i}}-1)^{2p^s},(x^{\tilde{r}} - 1)^{p^u})= (x^{\gcd(\tilde{r},\tilde{i})}-1)^{\min{(p^u,2p^s)}}.
\end{align*}
Since the degree of  $h(x)$ is equals to the dimension of the radical, we conclude that $\dim \text{ rad }(\varphi_i) = \gcd(\tilde{r}, \tilde{i}) \min(p^u,2p^s).$
\end{proof}

Using Theorem \ref{sol} and  the  previous corollary we can determine the number of solutions of $\Tr(x^{q^i+1} -x^2)=r\lambda$ in $\F_{q^r}$,  which will give us a complete description of  
$N_r(\mathcal C_{F,\lambda})$.
%the number of rationals points in $\F_{q^r}$ of the curve $y^q-y = x^{q^i+1} -x^2-\lambda$.
\begin{lemma}\label{rimpar}
Let $i,r $ be integers such that   $0< i < r$ and $\gcd(r,2p)=1$. Let  $v$ be the dimension of the radical of the  associated bilinear symmetric form $\Phi_i$. 
%{\color{red}Let $i,r $ be integers such that   $0< i < r$ and $\gcd(r,2p)=1$. Let $\Phi_i(x) :  \mathbb{F}_{q^r} \to \fq$ given by $\Tr(x^{q^i+1}-x^2)$ and $v$ the dimension of the radical of the  associated bilinear symmetric form $\varphi_i$.}
 Let  $i=p^s \tilde{i}$, where  $s$ is a non-negative integer and  $\gcd(\tilde{i},p) =1$. Then $r+v$ is even and,  for  $\lambda \in \F_{q}^*$, the constant $D$ of Theorem  \ref{sol} it is given by 
\[ D = \prod_{j=1}^{u} \left(\frac{q}{p_j}\right)^{\max\{0, \nu_{p_j}(r)-\nu_{p_j}(i)\}}.\]
where $r = p_1^{a_1}\cdots p_u^{a_u}$  is the prime factorization of $r$.
\end{lemma}

\begin{proof}
Let $M_\lambda$ be the set of solutions  of $\Tr(x^{q^i+1}-x^2) = \lambda$ in $\F_{q^r}$. Then $S_\lambda=|M_\lambda|$ is given by Equation  \eqref{casopar}. For each $\lambda \in \F_q^*$, if $\Tr(x^{q^i+1}-x^2) = \lambda$ we have
 \begin{align}\label{outrasol}
\Tr((x^{q^j})^{q^i+1} - (x^{q^j})^2) = \Tr((x^{q^i+1}-x^2)^{q^{j}}) = \Tr(x^{q^i+1}-x^2) = \lambda,
\end{align}
for all $0 \le j \le r-1.$   

 We first consider the case $r= p_1^a$, where $p_1$ is an odd prime and $\gcd(p_1,p) =1$.  By Equation \eqref{outrasol},  for each   $\alpha\in M_{\lambda}$ we can associate   another $d-1$ elements of $M_{\lambda}$, where  $d$ is the smallest positive divisor of $r = p_1^a$ such that  $\alpha^{q^{d}} = \alpha$. For each $\alpha \in M_{\lambda}$ we have $d>1$, because otherwise  $\alpha^q = \alpha$, which implies 
\(\alpha^{q^i+1}-\alpha^2 = \alpha^2-\alpha^2 = 0,\)
which is a contradiction because $\lambda\ne 0$. Then $d$ is a multiple of $p_1$ and Equation   \eqref{casopar} of Theorem \ref{sol} can be rewritten modulo  $p_1$ as
\[q^{r-1} - D q^{(r+v-2)/2} \equiv 0 \pmod {p_1},\]
which is equivalent to 
\[D \equiv (q^{(r+v-2)/2})^{-1} \equiv q^{(r+v-2)/2}\pmod {p_1},\]
where in the last congruence we use the fact that $D = \pm 1$. By Lemma \ref{dimensao} we obtain
\begin{align*}
D &\equiv q^{({p_1}^a+ {p_1}^{\min\{a,\nu_{p_1}(i)\}})/2-1} \pmod {p_1}\\
& \equiv q^{p_1^{\min{(a,\nu_{p_1}(i)})}(p_1^{(a-\min{(a,\nu_{p_1}(i))}}+1)/2-1 } \pmod {p_1}\\
& \equiv q^{(p_1^{(a-\min(a,\nu_{p_1}(i))}-1)/2} \pmod {p_1}\\
&\equiv  q^{(p_1^{\max\{0,a-\nu_{p_1}(i)\}}-1)/2} \pmod {p_1}\\
&\equiv  \left(\frac{q}{p_1}\right)^{(p_1^{\max\{0,a-\nu_{p_1}(i)\}}-1)/(p_1-1)} \pmod {p_1}\\
\end{align*}
Since $\left(\frac{q}{p_1}\right)$ assumes only the values  $\{-1,1\}$ and $\frac{p_1^l-1}{p_1-1} \equiv l \pmod 2$, we  conclude  that
\[D =  \left(\frac{q}{p_1}\right)^{\max\{0,a-\nu_{p_1}(i)\}}.\]

 Now we consider the general case  $r=p_1^{a_1}\cdots p_u^{a_u}$,  with $u\ge 1$. %with $t_j$ being distinct odd primes satisfying $\gcd(t_i,p)=1$. 
We will prove the result by induction on $u$. We already proved the case when $u=1$. Now suppose $u\geq 2$.  It follows from  Lemma \ref{dimensao} that the dimension of the radical of the bilinear symmetric form associated to $\Phi_i(x)$ is $v = \gcd(p_1^{a_1}\cdots p_u^{a_u},i)$ and therefore $v$ divides $p_1^{a_1}\cdots p_u^{a_u}$ and $r+v$ is even. Using Theorem \ref{sol}, for $\lambda \in \F_q^{*}$, we obtain that
\[S_{\lambda} = q^{r-1} - Dq^{(r+v-2)/2}.\]

Now let $\lambda\in \F_q^*$ and $r=\tilde{r} p_u^{a_u}$ where $\tilde{r}=p_1^{a_1}\cdots p_{u-1}^{a_{u-1}}$. We now consider the subfield  $\F_{q^{\tilde{r}}}\subset \F_{q^r}$.   The induction hypothesis, the number of solutions of $\Tr_{\F_{q^{\tilde{r}}}/\F_{q}}(x^{q^i+1}-x^2) = \lambda$ is 
\[S_{\lambda,\tilde{r}} =  q^{\tilde{r}-1}- \prod_{j=1}^{u-1} \left( \frac{q}{p_j} \right)^{\max\{0,\nu_{p_j}(r)-\nu_{p_j}(i)\}} q^{(\tilde{r}+v_1-2)/2},\]
 where  $v_1$ is the dimension of radical of the bilinear symmetric form associated to  $\Tr_{\F_{q^{\tilde{r}}}/\F_{q}}(x^{q^i+1}-x^2)$. From  Lemma~\ref{dimensao} we know that $v_1 = \gcd(\tilde{r},i)$ which implies $v = v_1 \cdot \gcd(p_u^{a_u},i)$.
Since $\displaystyle\F_{q^{\tilde{r}}} \cap \F_{q^{p_u^{a_u}}} = \F_q $, the solutions which are not in $\F_{q^{\tilde{r}}}$ can be grouped in sets of size congruent to $0$ $\pmod{p_u}$. In fact, since $\alpha$ is a solution then $\alpha^{q^j}$ is also a solution   and $\alpha \in \F_{q^r}$, it follows that  there exists $d>1$  dividing $p_u^{a_u}$ such that $\alpha^{q^d} = \alpha$. Then
\[S_{\lambda} \equiv S_{\lambda,\tilde r} \pmod {p_u},\]
which is equivalent to 
\[ q^{\tilde{r}p_u^{a_u}-1} - Dq^{(\tilde{r}p_u^{a_u}+v-2)/2} \equiv q^{\tilde{r}-1}-\prod_{j=1}^{u-1} \left( \frac{q}{p_j} \right)^{\max\{0,\nu_{p_j}(r)-\nu_{p_j}(i)\}} q^{(\tilde{r}+v_1-2)/2}\pmod {p_u}.\]
Since $q^{p_u} \equiv q \pmod {p_u}$, the previous equation is equivalent to \begin{align}\label{Drimpar}
 Dq^{(\tilde{r}p_u^{a_u}+v-2)/2} \equiv  \prod_{j=1}^{u-1} \left( \frac{q}{p_j} \right)^{\max\{0,\nu_{p_j}(r)-\nu_{p_j}(i)\}} q^{(\tilde{r}+v_1-2)/2}\pmod {p_u}.
\end{align}
Now let $v_2 = \gcd(p_u^{a_u},i)$. We observe that $q^{(p_u^{a_u}-1)/2} \equiv \left(\frac{q}{p_u}\right)^{a_u} \pmod {p_u}$ from which it follows that
\begin{align}\label{equiv1} \nonumber
q^{(\tilde{r} +v_1-\tilde{r}p_u^{a_u}-v )/2} & \equiv q^{-\tilde{r}\left(\frac{p_u^{a_u}-1}{2}\right)}q^{\left(\frac{v_1-v}{2}\right)} \pmod {p_u}\\ \nonumber
& \equiv \left(\frac{q}{p_u}\right)^{a_u\tilde{r}} q^{\left(\frac{v_1-v}{2}\right)} \pmod {p_u}\\ \nonumber
& \equiv \left(\frac{q}{p_u}\right)^{a_u\tilde{r}} q^{\frac{v_1(1-v_2)}{2}} \pmod{p_u}\\
& \equiv \left(\frac{q}{p_u}\right)^{a_u} q^{-v_1\frac{(p_u^{\min\{a_u,\nu_{p_u}(i)\}}-1)}{2}} \pmod{p_u}.\\ \nonumber
\end{align}
Equations \eqref{Drimpar} and \eqref{equiv1} allow us to conclude that 
\begin{align*}
D &\equiv \prod_{j=1}^{u-1} \left( \frac{q}{p_j} \right)^{\max\{0,\nu_{p_j}(r)-\nu_{p_j}(i)\}} q^{(\tilde{r}+v_1-\tilde{r}p_u^{a_u}-v)/2} \pmod {p_u}\\
& \equiv \prod_{j=1}^{u-1} \left( \frac{q}{p_j} \right)^{\max\{0,\nu_{p_j}(r)-\nu_{p_j}(i)\}} \left(\frac{q}{p_u}\right)^{a_u} q^{-v_1\frac{(p_u^{\min\{a_u,\nu_{p_u}(i)\}}-1)}{2}}\pmod{p_u}\\
& \equiv \prod_{j=1}^{u-1} \left( \frac{q}{p_j} \right)^{\max\{0,\nu_{p_j}(r)-\nu_{p_j}(i)\}} \left(\frac{q}{p_u}\right)^{a_u} \left(\frac{q}{p_u}\right)^{-\min\{a_u,\nu_{p_u}(i)\}}\pmod {p_u}\\
&\equiv \prod_{j=1}^{u} \left( \frac{q}{p_j} \right)^{\max\{0,\nu_{p_j}(r)-\nu_{p_j}(i)\}} \pmod{p_u} 
\end{align*}
and consequently $D = \displaystyle \prod_{j=1}^{u} \left( \frac{q}{p_j} \right)^{\max\{0,\nu_{p_j}(r)-\nu_{p_j}(i)\}}.$

\end{proof}
\begin{remark}
From Theorem \ref{sol} we have that  $D$  independent of the value of  $\lambda \in \F_q$.
Then, by Lemma \ref{rimpar},  for $\lambda = 0$ we have   that the value of $D$  in Equation \eqref{casopar}  is $ \prod_{j=1}^{u} \left(\frac{q}{p_j}\right)^{\max\{0, \nu_{p_j}(r)-\nu_{p_j}(i)\}}.$
\end{remark}
%The following definition will be useful in the next results for the calculation of $S_{\lambda}$.

For extensions of  degree  power of $2$ we have the following result. 
%\vm{Na verdade, n\~ao \'e s\'o esse resultado. Seria o caso de uma se\c c\~ao separada ou ent\~ao deixar claro quais resultados assumem essa hip\'otese,}
\begin{lemma}\label{D2}
Let $b,i,r$ be  integers such that $0< i < r$ and  $r = 2^b$. Let $v$ be the dimension of the radical of the associate bilinear symmetric form of $\Phi_i$. 
%{\color{red}Let $\Phi_i(x) : \mathbb{F}_{q^r}\to \fq$ given by  $\Tr(x^{q^i+1}-x^2)$ and  $v$ be the dimension of the radical of its associate bilinear symmetric form.}
 For any $\lambda \in \F_q$ the value of  $D$ as defined in Theorem \ref{sol}, is given by
\[D = \begin{cases} 
 \chi(-\lambda)& \text{ if } b=1; \\
(-1)^{(q-1)(2^b-v)/4} & \text{ if } b\ge 2 \text{ and } r+v \text{ is even} ; \\
(-1)^{(q+1)/2} \cdot \chi\left(-\frac{\lambda}{2^{b-1}}\right)& \text{ if } b\ge 2 \text{ and } r+v \text{ is odd}. \\
\end{cases} \]
%\vm{Qual \'e o pacote de \LaTeX\ que permite usar ' $\backslash $begin\{cases\}'?}
%\begin{enumerate}[(i)]
%\item If $b=1$, then $D =  \chi(-\lambda);$
%
%\item  If $b \ge 2$ and  $r+v$ is even, then $D= (-1)^{(q-1)(2^b-v)/4}$.
%
%\item If $b\ge2$ and  $r+v$ is odd, then
%$D= 
%(-1)^{(q+1)/2} \cdot \chi\left(-\frac{\lambda}{2^{b-1}}\right)\varepsilon_{\lambda}$.
%\end{enumerate}

\end{lemma} 
\begin{proof}
When  $r=2$ it  follows that  $i=1$ %. For  $\lambda \in \F_q$, we need to find the number of solutions of 
and $\Tr(\alpha^{q+1}-\alpha^2) = \lambda$ is equivalent to 
\begin{align} \label{eqcharc2}
\alpha^{q^2+q}-\alpha^{2q} +\alpha^{q+1}-\alpha^2 = \lambda,
\end{align}
which can be written as $(\alpha^q-\alpha)^2 = -\lambda$.
If $\lambda = 0$ that relation is equivalent to $\alpha^q-\alpha = 0$, and then $\alpha \in \F_q$, in which case we have $q$ solutions. For $\lambda \in \F_q^{*},$  let us consider the  following maps:
\begin{align*}
\psi : \F_{q^2}  &\to  \F_{q^2}& \text{ and }&&  \tau :\F_{q^2} & \to \F_{q^2}\\
 x &\mapsto x^q-x. && & x & \mapsto x^2 .
\end{align*}
In order to determine the number of solutions of Equation~\eqref{eqcharc2} it is enough to fix  $a\in\F_{q^2}$ such that $\tau(\psi(a)) = -\lambda$. Let $\{1,\alpha\}$ be a basis of $\F_{q^2}/\F_q$. The image of $\{1,\alpha\}$ by $\psi$ is $\{0,\beta\}$, where $\beta = \alpha^q-\alpha$. Since $\ker(\psi) = \F_q$, the image of  $\psi$ is generated by $\beta$. Therefore it  is sufficient to consider the elements of the form $c\alpha$, with $c \in \F_q$, i.e., 
\[\tau(\psi(c \alpha)) = \tau(c \beta) = c^2 \beta^2.\]
 We now claim that $\beta \notin \F_q$. For that, suppose by contradiction that $\beta^{q} = \beta$. Then
\[\alpha^{q^2}-\alpha^q-\alpha^q+\alpha = 0\]
which only happens if $-2(\alpha^q-\alpha) = 0$. But this is not possible because  $\alpha \in \F_{q^2} \setminus \F_q$ and $p \neq 2$.
Consequently   $\tau(\psi(c\alpha)) = -\lambda$ if and only if  $c^2 \beta^2 = -\lambda$, and since $\beta \notin \F_q$, this equation  has solutions if and only if  $-\lambda$ is not a square in $\F_q$. In this case,  Equation~\eqref{eqcharc2} has $2q$ solutions.

Now we consider the case when $r= 2^b$ with $b>1$. Let  $i = p^s\tilde{i}$, where $\gcd(p,\tilde{i})=1$. From  Lemma~\ref{dimensao} we know that $v= \gcd(2^b, \tilde{i})\min{(1,2p^s)} = \gcd(2^b,i)$, which implies  that $v$ is of the form $2^c$ with $0 \le  c \le b$. Let $M_\lambda$ be the set of solutions  of $\Tr(x^{q^i+1}-x^2) = \lambda$ in $\F_{q^r}$. We now consider two cases.
\begin{enumerate}
\item  $r+v$ is even.

In this case $v$ and  $i$ are even. The number of solutions of $\Tr(x^{q^i+1}-x^2) = \lambda$ is given by  Equation~\eqref{casopar}. If  $\Tr(\alpha^{q^i+1}-\alpha^2) = \lambda$ for some $\alpha \in \F_q^*$, we have 
\begin{align}\label{sols}
\Tr((\alpha^{q^j})^{q^i+1}-(\alpha^{q^j})^2) = \Tr((\alpha^{q^i+1}-\alpha^2)^{q^{j}}) = \Tr(\alpha^{q^i+1}-\alpha^2) = \lambda,
\end{align}
for  each $0 \le j \le r-1$. Since $r= 2^b\ge4$ and by Equation~\eqref{sols}, for each $\alpha\in M_{\lambda}$ we can associate  another $d-1$ elements  of $M_{\lambda}$, 
where $d$ is the smallest positive divisor of  $r = 2^b$ such that  $\alpha^{q^d}= \alpha$.  We claim that $d>2$. In fact, if  $d=1$ we have   $(\alpha^{q^i+1}-\alpha^2)^q = \alpha^2-\alpha^2 = 0$, and this implies  $\lambda = 0$, which is a contradiction.
 In the case $d =2$, for  $\alpha \in \F_{q^2}$ we have $(\alpha^{q^i+1}-\alpha^2)^{q^2} = \alpha^{q^i+1}-\alpha^2 = \alpha^{2} - \alpha^2=0$, since $i$ is even. This relation also implies $\lambda=0$, which is also a contradiction. Then Equation~\eqref{sols}  does not have solutions in these cases.  Consequently $d>2$, which implies that $4$ divides $d$ for any $\alpha \in M_{\lambda}$ and seen  Equation~\eqref{casopar} of  Theorem \ref{sol}, we obtain the  relation 
\[q^{2^b-1} - D q^{(2^b+v-2)/2} \equiv 0 \pmod 4,\]
which is equivalent to 
\begin{align*}
D& \equiv  q^{2^b-1-(2^b+v-2)/2}  \equiv q^{(2^b-v)/2}\pmod 4.\\
\end{align*}
We conclude that  $D = (-1)^{(q-1)(2^b-v)/4}$.
% and we conclude that
%\[S_{\lambda} = q^{2^b-1} -(-1)^{(q-1)(2^b-v)/4} q^{(2^b+v-2)/2} .\]

\item $r+v$ is odd.

In this case $v$ is a odd divisor of $2^b$ and therefore  $v=1$. By the same argument used in the previous case the number of solutions of $\Tr(\alpha^{q^i+1}-\alpha^2) = \lambda$  is given by Equation~\eqref{casoimpar}. Furthermore, it follows that for $\lambda \in \F_q^*$ we have $\Tr(\alpha^{q^i+1}-\alpha^2) = \lambda$ if and only if 
\begin{align}\label{sols1}
\Tr((\alpha^{q^j})^{q^i+1}-(\alpha^{q^j})^2) = \Tr((\alpha^{q^i+1}-\alpha^2)^{q^{j}}) = \Tr(\alpha^{q^i+1}-\alpha^2) = \lambda,
\end{align}
for all  $0 \le j \le r-1.$ % Since $r= 2^b\ge4$ and using Equation~\eqref{sols1}, f
Therefore for each $\alpha\in M_{\lambda}$ we can associate  another $d-1$ elements of $S_{\lambda}$, where $d$ is the smallest divisor of  $r = 2^b\ge 4$  such that $\alpha^{q^{d}} = \alpha$. The case $d=1$ does not happen, otherwise we would have $\lambda =0$.

Suppose now that $\alpha \in \F_{q^2}\cap M_{\lambda} \subset \F_{q^r}$. We then have 
%For $\alpha \in \F_{q^2}$ we can written  $(\alpha^{q^i+1}-\alpha^2)^q = \alpha^{q^{i+1}+q}-\alpha^{2q}$, and besides that  $(\alpha^{q+1}-\alpha^2)^{q^2} = \alpha^{q+1}-\alpha^2$. We are interesting in the solutions of 
\begin{align}\label{ab}
\lambda &= \Tr(\alpha^{q^i+1}-\alpha^2) = 2^{b-1} \cdot( (\alpha^{q^{i+1}+q}-\alpha^{2q}) +\alpha^{q^i+1}-\alpha^2 ).
\end{align}
As in the previous case, Equation~\eqref{ab} does not have solution in $\F_q$. Therefore  $\alpha \in \F_{q^2}\setminus \F_q$ and  the equation 
\[\lambda  = 2^{b-1} \cdot( \alpha^{q+1}+\alpha^{q+1}-\alpha^{2q} -\alpha^2 )\]
  can be written as $(\alpha^q-\alpha)^2 = \gamma,$ where  $\gamma = -\frac{\lambda}{2^{b-1}}$. Using the same argument of  item (i) of Lemma \ref{D2} it follows that $(\alpha^q-\alpha)^2 = \gamma$ has solutions in $\F_{q^2}$ if and only if $\gamma$ is not a square in $\F_q$,
which is equivalent to $-\frac{\lambda}{2^{b-1}}$ not being a square in $\F_q$, and in this case we have $2q$ solutions for Equation~\eqref{eqcharc2} in $\F_{q^2}$. 
Consequently the number of solutions of Equation  \eqref{ab} in $\F_{q^2}$ is $\left(1-\chi(-\frac{\lambda}{2^{b-1}})\right)q$ and then 
\[S_{\lambda} - \left(1-\chi\left(-\frac{\lambda}{2^{b-1}}\right)\right)q \equiv 0 \pmod 4.\]
By Theorem \ref{sol}, it follows that 
\[q^{2^b-1} + D q^{(2^b+v-1)/2} \equiv \left(1-\chi\left(-\frac{\lambda}{2^{b-1}}\right)\right)q  \pmod 4,\]
%that is equivalent to \vml{
i.e.,
$$D\equiv \left(1-\chi\left(-\frac{\lambda}{2^{b-1}}\right)\right)q^{1-(2^b+v-1)/2} - q^{(2^b-v-1)/2}\pmod 4.$$
 Therefore 
\begin{align*}D  &\equiv  \left(\left(1-\chi\left(-\frac{\lambda}{2^{b-1}}\right)\right)q^{2-2^b}-1\right)q^{(2^b-v-1)/2} \pmod 4.
\end{align*}
Since $v=1$ and   $q^2 \equiv 1 \pmod 4$ we conclude that 
\begin{align*} D & \equiv  -q^{2^{b-1}-1} \cdot \chi\left(-\frac{\lambda}{2^{b-1}}\right)  \equiv -q \cdot \chi\left(-\frac{\lambda}{2^{b-1}}\right) \pmod 4\\
\end{align*}
\end{enumerate}
and consequently \[ D = (-1)^{(q+1)/2} \cdot \chi\left(-\frac{\lambda}{2^{b-1}}\right).\]

The case $\lambda = 0$ follows from Theorem \ref{sol}, which tells us that $D$ is the same for any $\lambda\in \F_q$, if $r+v$ is even. In the case when $r+v$ is odd, we have $D=0$.
%And as consequence 
%\[S_{\lambda} = q^{2^b-1} + (-1)^{(q-1)/2} \cdot \chi\left(-\frac{\lambda}{2^{b-1}}\right) q^{(2^b+v-2)/2}. \]
\end{proof}
The following definitions are helpful to allows us to rewrite the expressions of  $D$  in a more simpler  way. 
\begin{definition}
For each  $\lambda \in \F_q$ we define
\[\varepsilon_{\lambda} = \begin{cases}
q-1 & \text{ if } \lambda =0 \\
-1 & \text{ otherwise.} \\
\end{cases} \quad \text{ and} \quad \varepsilon'_{\lambda} = \begin{cases}
0 & \text{ if } \lambda =0 \\
-1 & \text{ otherwise.} \\
\end{cases}\]
\end{definition}
In the following theorem we use Theorem \ref{sol} and Lemma \ref{D2}, to determine %in the following theorem 
the value of $S_{\lambda}$. 
\begin{theorem}\label{char2}
Let $b,i,r$ be integers such that $0< i < r$ and  $r = 2^b$. %Let $\Phi_i : \F_{q^r} \to \fq$ given by  $\Tr(x^{q^i+1}-x^2)$. % and  $v$ be the dimension of the bilinear symmetric form associated to $\Phi_i$.  
For $\lambda \in \F_{q}$, the number of solutions $S_{\lambda}$ of $\Phi_i(x) = \lambda$ in $\F_{q^r}$ is given by 
\[S_{\lambda} = \begin{cases}
 (1-\chi(-\lambda))q & \text{ if } b=1;\\
q^{2^b-1} 
+(-1)^{(q-1)(2^b-v)/4} q^{(2^b+v-2)/2}\varepsilon_{\lambda} & \text{ if } b\ge 2 \text{ and } r+v \text{ is even};\\
q^{2^b-1} + (-1)^{(q-1)/2} \cdot \chi\left(-\frac{\lambda}{2^{b-1}}\right) q^{(2^b+v-1)/2} \varepsilon'_{\lambda}& \text{ if } b\ge 2 \text{ and } r+v \text{ is odd},\\
\end{cases}\]
where  $v=\gcd(2^b, i)$ is the dimension of the radical of the bilinear symmetric form associated to $\Phi_i$.
%\begin{enumerate}[(i)]
%\item If  $b=1$:  $S_{\lambda} = (1-\chi(-\lambda))q;$
%\item If $b \ge 2$ and $r+v$ is even : $S_{\lambda} =q^{2^b-1} -(-1)^{(q-1)(2^b-v)/4} q^{(2^b+v-2)/2}$;
%\item If $b \ge 2$ and $r+v$ is odd: $S_{\lambda} = q^{2^b-1} - (-1)^{(q-1)/2} \cdot \chi\left(-\frac{\lambda}{2^{b-1}}\right) q^{(2^b+v-2)/2}\varepsilon_{\lambda}$. 
%\end{enumerate}

\end{theorem}

The results obtained in Lemmas \ref{rimpar} and \ref{char2}
can be used inductively to obtain the following result for extensions of degree $r$ satisfying  $\gcd(r,p)=1$. 

\begin{theorem} \label{anyr}
Let $b,i,r$ be  integers such that $0< i < r$, $r = 2^b\tilde{r},$ $\tilde r= p_1^{a_1}\cdots p_u^{a_u}$  the prime factorization of $\tilde r$  and $\gcd(\tilde r,2p)=1$. 
%Let $\Phi_i:\F_{q^r} \to \fq$ given by $\Tr(x^{q^i+1}-x^2)$. % and $v$ be the dimension of the bilinear symmetric form associated to $\Phi_i$.  
For $\lambda \in \F_{q}$, the number of solutions of  $\Phi_i(x)=\lambda$ in $\F_{q^r}$ is  
\[S_{\lambda} = \begin{cases}
q^{r -1} + \displaystyle \prod_{j=1}^{u} \left(\frac{q}{p_j}\right)^{\max\{0,\nu_{p_j}(\tilde{r})-\nu_{p_j}(i)\}}q^{\frac{r + 2 v_0 -2}{2}}\varepsilon_{\lambda} & \text{ if $i$ is even and } b =1;\\
q^{r-1} +\displaystyle(-1)^{(q^{\tilde{r}}-1)(2^b-v_1)/4}q^{\frac{(r-\tilde{r}v_1-2)}{2}}\varepsilon_{\lambda} & \text{ if $i$ is even and } b \ge2;\\
q^{r-1} +\displaystyle\prod_{j=1}^{u} \left(\frac{q}{p_j}\right)^{\max\{0,\nu_{p_j}(r)-\nu_{p_j}(i)\}}q^{\frac{(r+2^bv_0-2)}{2}}\varepsilon'_{\lambda} & \text{ if $i$ is odd and } b \ge2.\\
\end{cases}\]
%\begin{enumerate}[(i)]
%\item \(S_{\lambda} =q^{r -1} - \displaystyle \prod_{j=1}^{u} \left(\frac{q}{p_j}\right)^{\max\{0,\nu_{p_j}(\tilde{r})-\nu_{p_j}(i)\}}q^{\frac{r + 2 v_0 -2}{2}},\)
%  if $i$ is even and $b=1$
%\item 
%\(S_{\lambda} = q^{r-1} -(-1)^{(q^{\tilde{r}}-1)(2^b-v_1)/4}q^{\frac{(r-\tilde{r}v_1-2)}{2}}, \) if $i$ is even and $b\ge2$
%
%\item \(S_{\lambda}  = q^{r-1} -\prod_{j=1}^{u} \left(\frac{q}{p_j}\right)^{\max\{0,\nu_{p_j}(r)-\nu_{p_j}(i)\}}q^{\frac{(r+2^bv_0-2)}{2}}\varepsilon_{\lambda},\)  if $i$ is odd
%
%\end{enumerate}
where $v_0=\gcd(\tilde r,i)$ and $v_1 = \gcd(2^b,i)$.
\end{theorem}

\begin{proof} By Lemma \ref{dimensao} it follows that $v = \gcd(r,i)$. We split the proof in cases. 
\begin{enumerate}[(i)]
\item {\em Case $i$  even and $b=1$}. Using the  transitivity of the trace function,  we obtain
\begin{align} \label{ss1} \lambda = \Tr(\alpha^{q^i+1}-\alpha^2) = \Tr_{\F_{q^{2}}/\F_q}(\Tr_{\F_{q^{r}}/\F_{q^2}}(\alpha^{q^i+1}-\alpha^2)),\end{align}
where $\alpha $ is such that $\Phi_i(\alpha)=\lambda.$ Let $\mu = \Tr_{\F_{q^{r}}/\F_{q^2}}(\alpha^{q^i+1}-\alpha^2)\in \F_{q^2}$. Then  Equation \eqref{ss1} is equivalent to  the system
\[\left\{
\begin{array}{ll}
\Tr_{\F_{q^{2}}/\F_q}(\mu) = \lambda;\\
\Tr_{\F_{q^{r}}/\F_{q^{2}}}(\alpha^{q^i+1}-\alpha^2) = \mu.
\end{array}\right. \]
Let us define  $Q= q^{2}$, then $Q^{\tilde r} = q^r$. 
Since $b=1$, by Lemma \ref{rimpar} it follows that the number of solutions of $\Tr_{\F_{q^{r}}/\F_{q^2}}(x^{q^i+1}-x^2)) = \mu$ 
is $$Q^{\tilde{r} -1} - \displaystyle \prod_{j=1}^{u} \left(\frac{q}{p_j}\right)^{\max\{0,\nu_{p_j}(\tilde{r})-\nu_{p_j}(i)\}}Q^{\frac{\tilde r + v_0 -2}{2}}.$$
The dimension of the radical of   $\Tr_{\F_{q^2}/\F_{q}}(x^{q^i+1}-x^2))$ is $v_0 = \gcd(\tilde r,i)$. The number of solutions of  $\Tr_{\F_{q^2}/\F_q}(\mu) = \lambda$ is $q$  and therefore %for $\lambda \in \F_{q}^*$
\begin{align*}S_{\lambda} &= q \left(Q^{\tilde{r} -1} - \displaystyle \prod_{j=1}^{u} \left(\frac{q}{p_j}\right)^{\max\{0,\nu_{p_j}(\tilde{r})-\nu_{p_j}(i)\}}Q^{\frac{\tilde r + v_0 -2}{2}}\right)\\
& = q \left(q^{2\tilde{r} -2} - \displaystyle \prod_{j=1}^{u} \left(\frac{q}{p_j}\right)^{\max\{0,\nu_{p_j}(\tilde{r})-\nu_{p_j}(i)\}}q^{\frac{2\tilde r + 2 v_0 -4}{2}}\right)\\
& = q^{r -1} - \displaystyle \prod_{j=1}^{u} \left(\frac{q}{p_j}\right)^{\max\{0,\nu_{p_j}(\tilde{r})-\nu_{p_j}(i)\}}q^{\frac{r + 2 v_0 -2}{2}}.\\
\end{align*}

\item {\em Case $i$  even and $b\ge2$}.  As above, we have
\begin{align}  \lambda = \Tr(\alpha^{q^i+1}-\alpha^2) = \Tr_{\F_{q^{\tilde{r}}}/\F_q}(\Tr_{\F_{q^{r}}/\F_{q^{\tilde{r}}}}(\alpha^{q^i+1}-\alpha^2)),\label{s1}\end{align}
where $\alpha $ is such that $\Phi_i(\alpha)=\lambda.$
%Let $\mu \in \F_{q^{\tilde{r}}}^{*}$ such that  $\mu = \Tr_{\F_{q^r}/F_{q^{\tilde{r}}}}(x^{q^i+1}-x^2)$.
 Finding solutions of the Equation \eqref{s1} is equivalent to  finding  solutions  of the system
\[\left\{
\begin{array}{ll}
\Tr_{\F_{q^{\tilde{r}}}/\F_q}(\mu) = \lambda;\\
\Tr_{\F_{q^{r}}/\F_{q^{\tilde{r}}}}(\alpha^{q^i+1}-\alpha^2) = \mu.
\end{array}\right. \]
Putting $Q= q^{\tilde{r}}$ we have that $Q^{2^b} = q^r$.  Since $b\ge 2$, it follows from Lemma \ref{char2}(ii) that the number of solutions of  $\Tr_{\F_{q^{r}}/\F_{q^{\tilde{r}}}}(x^{q^i+1}-x^2)) = \mu$ is 
\[S_{\mu} = Q^{2^b-1} -(-1)^{(Q-1)\frac{(2^b-v_1)}{4}}Q^{\frac{(2^b-v_1-2)}{2}},\]
where $v_1 = \gcd(2^b,i)$ is the dimension of the radical of  $\Tr_{\F_{q^{r}}/\F_{q^{\tilde{r}}}}(x^{q^i+1}-x^2) $. Besides that, the number of solutions of  $\Tr_{\F_{q^{\tilde{r}}}/\F_q}(\mu) = \lambda$ is $q^{\tilde{r}-1}$ and  the number of solutions of  $\Phi_i(x) = \lambda$ is
\begin{align*}q^{\tilde{r}-1} (Q^{2^b-1} -(-1)^{(Q-1)\frac{(2^b-v_1)}{4}}Q^{\frac{(2^b-v_1-2)}{2}})&  =  q^{\tilde{r}-1} (q^{r-\tilde{r}} -(-1)^{q^{(\tilde{r}-1)\frac{(2^b-v_1)}{4}}}q^{\frac{(r-\tilde{r}v_1-2\tilde{r})}{2}})\\
& = q^{r-1} -(-1)^{q^{(\tilde{r}-1)\frac{(2^b-v_1)}{4}}}q^{\frac{(r-\tilde{r}v_1-2)}{2}}. 
\end{align*}
\item {\em Case  $i$  odd}. We define $Q= q^{2^b}$;  then $Q^{\tilde{r}} = q^r$ and 
%. Using the relation of transitivity of the trace function  we have
\begin{align} \label{s2}\lambda = \Tr(\alpha^{q^i+1}-\alpha^2) = \Tr_{\F_{Q}/\F_q}(\Tr_{\F_{q^{r}}/\F_{Q}}(\alpha^{q^i+1}-\alpha^2)),\end{align}
where $\alpha $ is such that $\Phi_i(\alpha)=\lambda.$ 
%Let  $\mu \in \F_{Q}^{*}$ such that  $\mu = \Tr_{F_{Q}/\F_q}(x^{q^i+1}-x^2)$.
It follows that the number of solutions of \eqref{s2} is equal to the number of solutions of the system 
\[\left\{
\begin{array}{ll}
\Tr_{\F_{Q}/\F_q}(\mu) = \lambda;\\
\Tr_{\F_{q^{r}}/\F_{Q}}(\alpha^{q^i+1}-\alpha^2)) = \mu.
\end{array}\right. \]
 By Corollary \ref{rimpar} we have that the number of solutions of  $\Tr_{\F_{q^{r}}/\F_{Q}}(x^{q^i+1}-x^2)) = \mu$ with $\mu\ne 0$ is  
\[Q^{\tilde{r}-1} - \displaystyle \prod_{j=1}^{u} \left(\frac{q}{p_j}\right)^{\max\{0,\nu_{p_j}(r)-\nu_{p_j}(i)\}}Q^{\frac{(\tilde{r}+v_0-2)}{2}},\]
where $v_0 = \gcd(\tilde{r},i)$ is the dimension of the radical of $\Tr_{\F_{q^{r}}/\F_{Q}}(x^{q^i+1}-x^2)$.
Since the number of solutions of  $\Tr_{\F_Q/\F_q}(\mu) = \lambda$ is $q^{2^b-1}$, we conclude that the number of solutions of $\Phi_i(x) = \lambda$ is 
\begin{align*}
S_{\lambda} & = q^{2^b-1}\left(Q^{\tilde{r}-1} - \displaystyle \prod_{j=1}^{u} \left(\frac{Q}{p_j}\right)^{\max\{0,\nu_{p_j}(r)-\nu_{p_j}(i)\}}Q^{\frac{(\tilde{r}+v_0-2)}{2}}\right) \\&= q^{2^b-1}\left(q^{r-2^b} - \displaystyle \prod_{j=1}^{u} \left(\frac{q}{p_j}\right)^{\max\{0,\nu_{p_j}(r)-\nu_{p_j}(i)\}}q^{\frac{(r+2^bv_0-2^{b+1})}{2}}\right)\\
& = q^{r-1} -\prod_{j=1}^{u} \left(\frac{q}{p_j}\right)^{\max\{0,\nu_{p_j}(r)-\nu_{p_j}(i)\}}q^{\frac{(r+2^bv_0-2)}{2}}.
\end{align*}
\end{enumerate}

The case $\lambda=0$ follows using the same ideas and   analogous  formulas for $S_0$, for extensions of $\F_q$ of degree  power of $2$ and odd.
\end{proof}

Using Lemma \ref{rimpar} and Theorem \ref{anyr} we can determine the number of affine rational points  of the curve $y^q-y = x^{q^i+1}-x^2-\lambda$, that we show as in the following  theorem.

\begin{theorem}\label{th2}
Let $i,r$ be integers such that $0< i < r$. Let $\tilde{r}$ be an integer such that  $r = 2^b\tilde{r}$, $\gcd(\tilde r,2p)=1$ and $ \tilde r=  p_1^{a_1}\cdots p_u^{a_u}$ the prime factorization. For  $F(x) = x^{q^i}-x$ and $\lambda \in \F_q$ we have  

%
%Let $i,r$ be integers such that $0< i < r$, $r = 2^b\tilde{r}$, $ \tilde r=  p_1^{a_1}\cdots p_u^{a_u}$ is the prime factorization \vml{and $ \tilde r=  p_1^{a_1}\cdots p_u^{a_u}$ the prime factorization} of $\tilde r$ satisfying  \vml{such that} $\gcd(\tilde r,2p)=1$. Let $\lambda \in \F_q$ and  $F(x) = x^{q^i}-x$. Then   
 
% ({\color{red} Na verdade, acho que pode comentar, porque esse caso é igual ao segundo item})
%\[(1-\chi(-\lambda))q^{\tilde{r}+1},\]
%if $b=1$ and $i$ is even. Otherwise, for $\lambda \in \F_q^*$ the number of affine rational points  is
$$N_r(\mathcal C_{F,\lambda}) = q^r+ Dq^{(r+L)/2}$$
%\varepsilon_{\lambda}$$  
where  
\begin{enumerate}[(i)]
\item D=  \((1-\chi(-\lambda))q^{\tilde{r}+1}\varepsilon_{\lambda}, $ $  L=-\tilde{r}\gcd(2^b,i)\)
 if $b=1$ and $i$ is even; 
\item \(D = (-1)^{(q%^{t_1^{a_1}\cdots t_u^{a_u}}
-1)(2^b-\gcd(2^b,i))/4} \varepsilon_{\lambda}, $ $ L=-\tilde{r}\gcd(2^b,i)\)  if $b\ge2$ and $i$ is even;
\item \(D =  \prod_{j=1}^{u} \left(\frac{q}{p_j}\right)^{\max\{0,\nu_{p_j}(r)-\nu_{p_j}(i)\}}\varepsilon'_{\lambda}, $ $  L=2^b\gcd(\tilde{r},i)\)  if  $b=0$ or $b\ge 1$  and $i$ is odd. 
\end{enumerate}

\end{theorem}

\section{Some open problems}
We finish this paper enumerating some open problems.
We note that in Theorem \ref{thF(x)} we determined the number of  affine rational points   in  $\F_{q^r}^2$ of  $\mathcal C:y^q-y = x  F(x) - \lambda$ when $F(x)$ is a $\F_q$ linearized  and such that $g(x) = \gcd(f(x),x^r-1)$ is a self-reciprocal, where $f(x)$ is the associated to $F(x)$. We then have two problems:

\begin{problem}\label{p1}
Determine $N_r(\mathcal C_{F,\lambda})$ when $F(x)$ is not a $\F_q$-linearized.
\end{problem}

\begin{problem}\label{p2}
Determine $N_r(\mathcal C_{F,\lambda})$ when $g(x)$ is not a self-reciprocal.
\end{problem}

In Section $4$, we only considered extensions of degree $r$ such that $\gcd(r,p) =1$. This is the content of the next problem.
\begin{problem}\label{p3}
Determine explicitly $N_r(\mathcal C_{x  F(x), \lambda})$, when $\lambda \in \F_q$ and $F(x)$ is a $\F_q$ linearized  and such that $\gcd(r,p) = p$.
\end{problem}
In \cite{matilde}, the authors show that the number of monic irreducible polynomials in $\F_q[x]$ of degree $r$ and with the first and third coefficients prescribed is related to the curve 
\[y^q-y = x^{2q+1}-x^{q+2}.\]
Then, we have the following problem.
\begin{problem}\label{p4}
Determine $N_r(\mathcal C_{FG})$ when $F(x),G(x) \in \F_q[x]$ are $\F_{q}$-linearized.
\end{problem}

%
%
%\begin{problem}\label{p5}
%Determine $N_r(\mathcal C_{FG})$ when $F(x),G(x) \in \F_q[x]$ are $\F_{q}$-linearized.
%\end{problem}

\end{document}